\newtheorem {theorem}{Theorem}[section]
\newtheorem {proposition}{Proposition}[section]
\newtheorem {lemma}{Lemma}[section]
\newtheorem {example}{Example}[section]
\newtheorem {definition}{Definition}[section]
\newtheorem {remark}{Remark}[section]
\newtheorem {assumption}{Assumption}[section]
\newcommand{\M}{{\rm \bf M}}
\newcommand{\R}{{\mathbb R}}
\newcommand{\N}{\mathbb{N}}
\newcommand{\ba}{{\boldsymbol \alpha}}
\newcommand{\bb}{{\boldsymbol \beta}}
\newcommand{\bx}{\textrm{\bf{x}}}
\newcommand{\bz}{\textrm{\bf{z}}}
\newcommand{\by}{\textrm{\bf{y}}}
\newcommand{\va}{\textrm{\bf{a}}}
\newcommand{\vb}{\textrm{\bf{b}}}
\newcommand{\be}{\textrm{\bf{e}}}
\newcommand{\bze}{\boldsymbol{0}}
\newcommand{\p}{\textrm{\bf{p}}}
\newcommand{\q}{\textrm{\bf{q}}}
\newcommand{\bfd}{\textrm{\bf{d}}}
\newcommand{\dd}{d_{\max}}
\newcommand{\leqnomode}{\tagsleft@true}
\newcommand{\reqnomode}{\tagsleft@false}
\def\ees{{\accent"5E e}\kern-.385em\raise.2ex\hbox{\char'23}\kern-.08em}
\def\EES{{\accent"5E E}\kern-.5em\raise.8ex\hbox{\char'23 }}
\def\ow{o\kern-.42em\raise.82ex\hbox{
   \vrule width .12em height .0ex depth .075ex \kern-0.16em \char'56}\kern-.07em}
\def\OW{O\kern-.460em\raise1.36ex\hbox{
\vrule width .13em height .0ex depth .075ex \kern-0.16em \char'56}\kern-.07em}
\title{SPLD polynomial optimization and bounded degree SOS hierarchies}%$^\star$}
\author{Liguo Jiao}
\address[Liguo Jiao]{Academy for Advanced Interdisciplinary Studies, Northeast Normal University, Changchun 130024, Jilin Province, China; Shanghai Zhangjiang Institute of Mathematics, Shanghai 201203, China.}
\email{jiaolg356@nenu.edu.cn; hanchezi@163.com}
\author{Jae Hyoung Lee$^{\dag}$}
\address[Jae Hyoung Lee]{Department of Applied Mathematics, Pukyong National University, Busan 48513, Korea.}
\email{mc7558@pknu.ac.kr; mc7558@naver.com}
\author{Nguyen Bui Nguyen Thao}
\address[Nguyen Bui Nguyen Thao]{Department of Pedagogy, Dalat University, Vietnam.}
\email{nguyenbnt@dlu.edu.vn}
\thanks{$^{\dag}$Corresponding Author}
\keywords{Polynomial optimization, SPLD polynomials, Krivine--Stengle's positivstellensatz, bounded degree SOS hierarchies, portfolio optimization, polynomial regression}
\subjclass[2020]{Primary: 90C23; secondary: 90C22, 90C26}
\date{\today}
\begin{document}

\begin{abstract}
In this paper,  we introduce a new class of structured polynomials, called  {\it separable plus lower degree {\rm (SPLD)} polynomials}.
The formal definition of an SPLD polynomial, which extends the concept of SPQ polynomials (Ahmadi et al. in Math Oper Res 48:1316--1343, 2023), is provided.
A type of bounded degree SOS hierarchy, referred to as BSOS-SPLD, is proposed to efficiently solve optimization problems involving SPLD polynomials.
Numerical experiments on several benchmark problems indicate that the proposed method yields better performance than the standard bounded degree SOS hierarchy (Lasserre et al. in EURO J Comput Optim 5:87--117, 2017).
An exact SOS relaxation for a class of convex SPLD polynomial optimization problems is proposed.
Finally, we present an application of SPLD polynomials to convex polynomial regression problems arising in statistics.
\end{abstract}

\maketitle

%\tableofcontents

%%%%%%%%%%%%%%%%%%%%%%%%%%%%%%%%%%%%%%%%%%%%%%%%%%%%%%%%%%%%%%%%%%%%%%%%%%%%%%%%

\section{Introduction}%\label{sect:1}

In his seminal monograph \cite{Lasserre2015book}, Lasserre emphasizes that ``if symmetries or some structured sparsity in the problem data are detected and taken into account, then (polynomial) problems of much larger size can be solved."
In this context, we concentrate on polynomial optimization problems.
A standard polynomial optimization problem is of the form:
\begin{align}\label{POP}
\min\limits_{\bx\in\R^n} \ \left\{ f(\bx) \colon g_i(\bx)\geq 0, \ i=1,\ldots,m \right\}, \tag{${\rm POP}$}
\end{align}
where $f,g_i\colon\R^n\to\R,$ $i=1,\ldots,m,$ are real multivariate polynomials.
The standard Lasserre hierarchy, which is a sum-of-squares (SOS) based hierarchy of semidefinite programming (SDP) relaxations, has been demonstrated to be a powerful and effective approach to {\it globally} solving polynomial optimization problems~\eqref{POP}; see, e.g., \cite{Lasserre2001,Lasserre2010,Parrilo2003}.
The convergence of the Lasserre hierarchy (generically finite \cite{Nie2013,Nie2014}) relies on a fundamental SOS representation of a positive polynomial introduced by Putinar~\cite{Putinar1993}.

It is widely acknowledged that the size of the semidefinite programs in the Lasserre hierarchy grows combinatorially with the number of variables and the degree of the relaxation, which poses significant computational challenges for high-dimensional or high-degree polynomial optimization problems. Due to the current limitations of semidefinite solvers, its practical applicability is often restricted to problems of modest size, unless additional structures such as sparsity or symmetry are exploited; see, e.g., \cite{Lasserre2015book}.
To overcome this limitation, several enhancements to the Lasserre hierarchy have been proposed, targeting various aspects of polynomial optimization, as outlined below.

\begin{itemize}
\item[(i)] One direct approach is to design customized algorithms, such as those in \cite{Ghaddar2016,NieWang2012}, that avoid using interior-point methods when solving the SDP problems of the hierarchy.
\item[(ii)] By taking special structures (such as symmetries or certain sparsity patterns of the polynomial optimization problem) into account and restricting the sum-of-squares polynomials in the hierarchy accordingly, one can solve problems of much larger size; see, e.g., the correlative sparsity in \cite{Waki2006,Weisser2018} and the more recent hierarchies based on term sparsity and chordal approaches in \cite{Wang2021,Wang2021-1}.
%Such sparse SOS techniques can handle problems of much larger dimension.
It is worth noting that the Lasserre hierarchy has recently been shown to solve industrial-scale optimal power flow problems in electrical engineering with several thousand variables
and constraints using restricted sum-of-squares in the hierarchy; see, e.g., \cite{Josz2018,Wang2023book}.
\item[(iii)] Another approach is to exploit alternative conic programming hierarchies with lower computational cost, such as the scaled diagonally dominant sum of squares hierarchies \cite{AhmadiHall2019,AhmadiMajumdar2019,Kuang2019}.
\item[(iv)] (Bounded degree SOS hierarchy \cite{Lasserre2017} and its sparse version \cite{Weisser2018}) The main idea is to restrict the degree of the sum-of-squares of the hierarchy by using Krivine--Stengle's certificate of positivity (rather than Putinar) in real algebraic geometry (see, e.g., \cite{Krivine1964,Stengle1974}), and fix in advance the size of the semidefinite matrix for each relaxation in the hierarchy (in contrast to the standard Lasserre hierarchy).
\end{itemize}

\subsection*{Contributions}
In this paper, we study a subclass of polynomial optimization problems by combining items (ii) and (iv) mentioned above.
Overall, this paper makes the following contributions to polynomial optimization.
\begin{itemize}
\item[{\rm (C1)}] We introduce and study a new class of structured polynomials called SPLD ({\bf S}eparable {\bf P}lus {\bf L}ower {\bf D}egree) polynomials.
We present the formal definition of an SPLD polynomial below.
\begin{definition}[SPLD polynomial]\label{SPLD_poly}{\rm
A nonconstant polynomial $f: \R^n \to \R$ is said to be {\it separable plus lower degree} (SPLD) if
\begin{align*}
f(\bx) = s(\bx) + l(\bx)
\end{align*}
for all $\bx\in\R^n,$ where $s$ is a separable polynomial, i.e., $s(\bx):= \sum_{j = 1}^ns_j(x_j)$ for some univariate polynomials $s_j: \R \to \R,$ and $l$ is a polynomial with its degree lower than the degree of $s.$
}\end{definition}
\begin{remark}[{\bf Why SPLD polynomial?}]{\rm
The rationale for introducing SPLD polynomials stems primarily from the following considerations.
\begin{enumerate}[\upshape (a)]
\item If $l(\bx)$ in Definition~\ref{SPLD_poly} is a quadratic function, the SPLD polynomial coincides with the class of separable plus quadratic (SPQ) polynomials introduced by Ahmadi et al. \cite{Ahmadi2022}.
In this sense, SPLD polynomials include SPQ polynomials and some well-known test functions (e.g., the six-hump camelback function) as special cases.
\item For convex polynomials, as shown in Theorem~\ref{thm3} (see also Remark~\ref{rmk1}), it is reasonable to expect that convex polynomials can often be expressed as SPLD polynomials.
\item An SPLD polynomial is typically a {\em sparse} polynomial\footnote{A sparse polynomial is a polynomial with relatively few nonzero terms compared to the total number of possible monomials of a given degree and number of variables.} in the usual term sparsity sense: the high-degree part contributes only univariate monomials, and all multivariate interactions are confined to the lower-degree term $l$.
\item As a final note, from a practical viewpoint, the SPLD structure may be computationally attractive in situations where the objective and constraints consist of high-degree separable terms together with a comparatively low-degree multivariate term (see, e.g., the portfolio model in Section~\ref{subsec:4} and the SOS-convex SPLD polynomial regression in Section~\ref{subsec:5}).
\end{enumerate}
}\end{remark}
\item[{\rm (C2)}] Even with a small number of variables, the degree of an SPLD polynomial may be quite large because the separable term $s$ can contain high-degree terms.
In such cases, the bounded degree SOS hierarchy (BSOS) introduced by Lasserre et al.~\cite{Lasserre2017} may not be practically implementable due to the size of the associated Gram matrix.
To address this, we propose a variant of the bounded degree SOS hierarchy, called BSOS-SPLD, for optimization problems with SPLD polynomials over a compact semi-algebraic feasible set.
This hierarchy makes use of relaxations associated with Krivine--Stengle's certificate of positivity in real algebraic geometry \cite{Krivine1964,Stengle1974}.
We show that, under mild assumptions, the optimal values of both the primal and dual BSOS-SPLD relaxations converge to the global optimal value of the original polynomial optimization problem.
We also give a sufficient condition for finite convergence of the BSOS-SPLD hierarchy and for extracting a global minimizer.

Compared with the work of Ahmadi et al.~\cite{Ahmadi2022} on convex SPQ polynomials, which provides semidefinite programming formulations for convex SPQ optimization problems, our approach deals with a broader SPLD class (possibly nonconvex and of higher degree) and develops BSOS-type hierarchies based on Krivine--Stengle positivity certificates for global optimization over compact semi-algebraic sets.
\item[{\rm (C3)}] Since SPLD polynomials are typically sparse in their monomial support, one may also consider term-sparsity–exploiting moment-SOS hierarchies, such as TSSOS and Chordal-TSSOS \cite{Wang2021,Wang2021-1}.
In this paper, we take a different approach: we adapt the BSOS hierarchy based on the Krivine--Stengle positivity certificate to SPLD polynomials, thereby directly exploiting the algebraic decomposition $f=s+l$.

Moreover, even when an SPLD polynomial optimization problem is dense in the sense of correlative sparsity (e.g., when the objective/constraints involve many variables simultaneously), decompositions based on correlative sparsity may yield limited reduction.
Nevertheless, BSOS-SPLD may still exploit the algebraic decomposition $f=s+l$ to form smaller SDP blocks determined primarily by the lower-degree term, thus reducing the computational cost of the resulting semidefinite relaxations.

\item[{\rm (C4)}] An exact SOS relaxation for a class of SOS-convex SPLD polynomial optimization problems is proposed.
As a practical application, we consider polynomial regression problems in statistics using SOS-convex SPLD polynomials.
\end{itemize}

\subsection*{Outline}
The remainder of this paper is organized as follows.
In Section~\ref{sect:2}, we recall some notation and preliminaries on polynomials.
In Section~\ref{sect:3}, we propose a type of bounded degree SOS hierarchy to solve SPLD polynomial optimization problems.
Section~\ref{sect:4} presents numerical experiments illustrating the performance of the proposed hierarchy; moreover, an application to portfolio optimization problems is also studied.
Section~\ref{sect:5} focuses on solving a class of convex SPLD polynomials, and also on an application to convex SPLD polynomial regression.
Conclusions and further remarks are given in Section~\ref{sect:6}.

\section{Preliminaries}\label{sect:2}

We begin this section by fixing some notation and preliminaries.
Suppose $n \in \N$ with $n\geq 1$ (where $\N$ denotes the set of nonnegative integers).
Let $\R^n$ denote the Euclidean $n$-dimensional space, and let $\bx :=(x_1, x_2, \ldots, x_n)$ denote an arbitrary element of $\R^n.$
The nonnegative orthant of $\R^n$ is denoted by
$\R_{+}^n:=\left\{\bx \in \R^n \colon x_i \geq 0,\ i = 1,\ldots, n \right\}.$
Let $\mathcal{S}^n$ be the set of $n\times n$ symmetric matrices with the usual inner product $\langle A, B\rangle := {\rm trace}(AB)$ for $A,B\in \mathcal{S}^n.$
Let $X\in \mathcal{S}^n.$
We say $X$ is a positive semidefinite (PSD) matrix  denoted by $X\succeq 0,$
whenever $\bz^TX\bz\ge 0$ for all $\bz\in\R^n,$ and denote by $\mathcal{S}_+^n$
the set of $n\times n$ symmetric PSD matrices.

The ring of all real polynomials in the variable $\bx$ is denoted by $\R[\bx].$
Moreover, the vector space of all real polynomials in the variable $\bx$ with degree at most $d$ is denoted by $\R[\bx]_d.$
The degree of a polynomial $f$ is denoted by $\deg (f).$
For a multi-index $\ba:=(\alpha_1,\ldots,\alpha_n)\in\N^n,$ let us denote $|\ba|:=\sum_{i=1}^n\alpha_i$ and $\N^n_d:= \{\ba \in \N^n \colon  |\ba|\leq d\}.$
The notation $\bx^\ba$ denotes the monomial $x_1^{\alpha_1}\cdots x_n^{\alpha_n}.$
The canonical basis of $\R[\bx]_d$ is denoted by
\begin{equation}\label{cano_basis}
\lceil \bx\rceil_d:=(\bx^\ba)_{\ba\in\N^n_{d}}=(1,x_1,\ldots,x_n,x_1^2,x_1x_2,\ldots,x_n^2,\ldots,x_1^d,\ldots,x_n^d)^T,
\end{equation}
which has dimension $s(n,d):=\left( \substack{ n+d \\ d }\right).$
We say that a real polynomial $f$ is sum-of-squares (SOS)
if there exist real polynomials $p_j,$ $j = 1,\ldots,q,$ such that $f =\sum_{j=1}^{q}p_{j}^2.$
The set of all SOS real polynomials in the variable $\bx$ is denoted by $\Sigma[\bx].$ Clearly, $\Sigma[\bx]\subset \R[\bx].$
In addition, the set consisting of all SOS real polynomials with degree at most $2d$ is denoted by
$\Sigma[\bx]_{d}$, similarly one has $\Sigma[\bx]_{d}\subset\R[\bx]_{2d}.$
For $f\in\R[\bx]_d,$ we denote by $(f_\ba)$ $(\ba\in\N^n_d)$ the coefficients of $f.$

Given an $s(n, 2d)$-vector $\by:= (y_\ba)_{\ba\in\N^n_{2d}}$ with $y_{\bze}=1,$ let $\M_{d}(\by)$ be the moment matrix of dimension $s(n, d),$ with rows and columns labeled by \eqref{cano_basis} in the sense that
\begin{equation*}
\M_d(\by)(\ba,\bb):=y_{\ba+\bb}, \ \forall \ba,\bb\in\N^n_d.
\end{equation*}
Namely, $\M_d(\by)=\sum_{\ba\in\N^n_{2d}}y_\ba (\lceil \bx\rceil_d\lceil \bx\rceil_d^T)_\ba,$ where $(\lceil \bx\rceil_d\lceil \bx\rceil_d^T)_\ba$ denotes the coefficient matrix corresponding to the monomial $\bx^\ba$ in the polynomial matrix $\lceil \bx\rceil_d\lceil \bx\rceil_d^T.$
For example, for $n=2$ and $d=1,$
\begin{equation*}
\M_d(\by)=\sum_{\ba\in\N^2_{2}}y_\ba\left(
                                      \begin{array}{ccc}
                                        1 & x_1 & x_2 \\
                                        x_1 & x_1^2 & x_1x_2 \\
                                        x_2 & x_1x_2 & x_2^2 \\
                                      \end{array}
                                    \right)_\ba=\left(
                                      \begin{array}{ccc}
                                        y_{00} & y_{10} & y_{01} \\
                                        y_{10} & y_{20} & y_{11} \\
                                        y_{01} & y_{11} & y_{02} \\
                                      \end{array}
                                    \right).
\end{equation*}

The following proposition identifies whether a polynomial can be written as an SOS via semidefinite programming problems.
\begin{proposition}\label{proposition1}{\rm \cite{Lasserre2010,Lasserre2015book}}
A polynomial $f\in\R[\bx]_{2d}$ has an SOS decomposition if and only if there exists $Q\in \mathcal{S}^{s(n,d)}_+$ such that
\begin{equation*}
\langle (\lceil \bx\rceil_d\lceil \bx\rceil_d^T)_\ba ,Q\rangle=f_\ba, \ \forall \ba\in\N_{2d}^n.
\end{equation*}
\end{proposition}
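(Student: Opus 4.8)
The plan is to reduce the statement to a single polynomial identity, namely that $f$ admits a Gram-type representation
\[
f(\bx) \;=\; \lceil \bx\rceil_d^{\,T}\, Q\, \lceil \bx\rceil_d, \qquad Q\in\mathcal{S}^{s(n,d)},
\]
and then to invoke the standard fact that a real symmetric matrix $Q$ is positive semidefinite if and only if $Q=\sum_{j}\va_j\va_j^{T}$ for some vectors $\va_j$. First I would expand the polynomial matrix $\lceil \bx\rceil_d\lceil \bx\rceil_d^{T}$ in the monomial basis, so that for every $Q\in\mathcal{S}^{s(n,d)}$,
\[
\lceil \bx\rceil_d^{\,T} Q \lceil \bx\rceil_d \;=\; \big\langle Q,\ \lceil \bx\rceil_d\lceil \bx\rceil_d^{T}\big\rangle \;=\; \sum_{\ba\in\N^n_{2d}} \big\langle Q,\ (\lceil \bx\rceil_d\lceil \bx\rceil_d^{T})_\ba\big\rangle\,\bx^\ba .
\]
Comparing this with $f=\sum_{\ba\in\N^n_{2d}} f_\ba\bx^\ba$ shows that the linear system in the statement is equivalent to the identity $f(\bx)=\lceil \bx\rceil_d^{\,T} Q \lceil \bx\rceil_d$ in $\R[\bx]$. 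Hence it suffices to prove that $f$ is SOS if and only if it admits such a representation with $Q\succeq 0$.

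For the ``if'' direction, given $Q\succeq 0$ satisfying the coefficient conditions, I would take a factorization $Q=\sum_{j=1}^{q}\va_j\va_j^{T}$ with $\va_j\in\R^{s(n,d)}$ (e.g., via the spectral decomposition of $Q$), set $p_j(\bx):=\va_j^{T}\lceil \bx\rceil_d\in\R[\bx]_d$, and observe that
\[
\lceil \bx\rceil_d^{\,T} Q \lceil \bx\rceil_d \;=\; \sum_{j=1}^{q}\big(\va_j^{T}\lceil \bx\rceil_d\big)^{2} \;=\; \sum_{j=1}^{q} p_j^{2}.
\]
Together with the coefficient comparison above, this yields $f=\sum_{j} p_j^{2}\in\Sigma[\bx]$.

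For the ``only if'' direction, start from $f=\sum_{j=1}^{q} p_j^{2}$ with $p_j\in\R[\bx]$. The one point that needs an argument is that each $p_j$ may be assumed to lie in $\R[\bx]_d$: if $e:=\max_j\deg p_j\ge d+1$, then the degree-$2e$ homogeneous component of $f$ equals $\sum_{j}\big((p_j)_e\big)^{2}$, where $(p_j)_e$ denotes the top-degree form of $p_j$; since a sum of squares of real polynomials vanishes only when every summand does and some $(p_j)_e\ne 0$, this component is nonzero, contradicting $\deg f\le 2d<2e$. Thus $p_j\in\R[\bx]_d$, so $p_j=\p_j^{T}\lceil \bx\rceil_d$ with coefficient vector $\p_j\in\R^{s(n,d)}$, and
\[
f \;=\; \sum_{j=1}^{q} p_j^{2} \;=\; \lceil \bx\rceil_d^{\,T}\Big(\textstyle\sum_{j=1}^{q}\p_j\p_j^{T}\Big)\lceil \bx\rceil_d ,
\]
so $Q:=\sum_{j}\p_j\p_j^{T}\in\mathcal{S}^{s(n,d)}_+$, and comparing coefficients once more gives $\langle (\lceil \bx\rceil_d\lceil \bx\rceil_d^{T})_\ba, Q\rangle = f_\ba$ for all $\ba\in\N^n_{2d}$.

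The main (and essentially only) obstacle is the degree-truncation observation in the converse — that an SOS certificate of a polynomial of degree at most $2d$ may be taken to involve only squares of polynomials of degree at most $d$. Everything else is a routine translation between the Gram-matrix identity $f=\lceil \bx\rceil_d^{\,T} Q \lceil \bx\rceil_d$ and the equality of monomial coefficients, combined with the characterization of positive semidefinite matrices as Gram matrices.
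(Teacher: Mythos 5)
Your proof is correct and complete; it is the standard Gram-matrix argument (coefficient comparison reducing to the identity $f=\lceil \bx\rceil_d^{\,T}Q\lceil \bx\rceil_d$, factorization of $Q\succeq 0$ for one direction, and the degree-cancellation observation to truncate the squares to degree $d$ for the other), which is exactly the proof given in the cited references. The paper itself states this proposition without proof, quoting \cite{Lasserre2010,Lasserre2015book}, so there is nothing to compare beyond noting that your argument matches the classical one, including the only nontrivial point --- that the top-degree forms of the $p_j$ cannot cancel in a sum of squares.
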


We close this section by recalling the notion of SOS-convex polynomials, introduced by Helton and Nie \cite{Helton2010}, which are a subclass of convex polynomials . Remarkably, the SOS-convexity can be checked numerically by solving an SDP problem (see \cite{Helton2010}), whereas verifying the convexity of a polynomial (with even degree $d\geq 4$) is generally NP-hard (see \cite[Theorem 13.8]{Lasserre2015book}).
\begin{definition}[SOS-convexity \cite{Helton2010,Ahmadi2013}]{\rm
A real polynomial $f$ on $\R^n$ is called {\it SOS-convex} if the Hessian matrix function $H \colon \bx \to\nabla^2f(\bx)$ is an SOS matrix polynomial, that is, there exists a matrix polynomial $F(\bx)$ with $n$ columns such that $\nabla^2f(\bx)=F(\bx)^TF(\bx).$
Equivalently, $f(\bx) - f(\by) - \langle \nabla f(\by), \bx - \by \rangle$ is SOS in variables $(\bx, \by).$
}\end{definition}

\section{SPLD polynomial optimization}\label{sect:3}

In this section, we aim to efficiently solve a class of structured optimization problems involving SPLD polynomials (see Definition~\ref{SPLD_poly}).
We call such a class of optimization problems SPLD polynomial optimization problems, by which we mean a problem consisting of minimizing an SPLD polynomial over a feasible set defined by a finite system of SPLD polynomials. Mathematically speaking, consider the following problem:
\begin{align}\label{P}
\inf\limits_{\bx\in\R^n} \left\{ f_0(\bx) \colon  0\leq f_i(\bx)\leq 1, \ i=1,\ldots,m \right\}, \tag{${\rm P}$}
\end{align}
where $f_i\colon\R^n\to\R,$ $i=0,1,\ldots,m,$ are SPLD polynomials,
that is,
\begin{align*}
f_i(\bx):=s_i(\bx)+l_i(\bx), \ i=0,1,\ldots,m,
\end{align*}
for all $\bx\in\R^n,$ where each $s_i\colon\R^n\to\R$ is a separable polynomial and each $l_i\colon\R^n\to\R$ is a polynomial with $\deg(l_i) < \deg(s_i).$
We denote the feasible set of the problem \eqref{P} by
\begin{align}\label{feasible-k}
{\rm\bf{K}}:=\left\{\bx\in \R^n \colon 0\leq f_i(\bx)\leq 1, \ i=1,\ldots,m\right\}.
\end{align}

In what follows, we make the following assumptions on the feasible set ${\rm\bf{K}}.$
\begin{assumption}\label{assumption1}
We assume that
\begin{itemize}
  \item[{\rm (i)}] ${\rm\bf{K}}$ is a nonempty compact set and the family $\{1,f_1,\ldots,f_m\}$ generates $\R[\bx];$
  \item[{\rm (ii)}] the interior of the feasible set ${\rm\bf{K}}$ is nonempty.
\end{itemize}
\end{assumption}
It is worth noting that any compact feasible set can be rescaled into the set ${\rm\bf{K}}$ (as defined in \eqref{feasible-k}) without loss of generality.
Indeed, for any compact set ${\rm\bf{K}}':=\left\{\bx\in \R^n \colon g_i(\bx)\geq 0, \ i=1,\ldots,m\right\},$ we may define $f_i := g_i/ M,$ where $M > \max\limits_{1\leq i\leq m}\sup\limits_{\bx\in {\rm\bf{K}}'}\{g_i(\bx)\},$ so that ${\rm\bf{K}}'={\rm\bf{K}}.$

To proceed, we briefly recall an important theorem (by Krivine \cite{Krivine1964} and
Stengle \cite{Stengle1974}, respectively)
on the representation of polynomials that are positive on ${\rm\bf{K}}$.
\begin{lemma}[Krivine--Stengle's positivstellensatz \rm{\cite{Lasserre2010,Krivine1964,Stengle1974}}]\label{KS_positivity}
Let $f_i,$ $i =0, 1,\ldots, m,$ be real polynomials and
${\rm\bf{K}} = \{\bx \in\R^n \colon 0\leq f_i(\bx) \leq 1, \ i = 1,\ldots, m\}.$
Suppose that the {\rm Assumption~\ref{assumption1} (i)} holds.
If $f_0(\bx) > 0$ for all $\bx \in {\rm\bf{K}},$ then there exist finitely many nonnegative coefficients $c_{\p,\q}$ such that
\begin{align*}
f_0 - \sum_{\p,\q\in \N^m}c_{\p,\q} \prod_{i=1}^m  f_i^{p_i} (1-f_i)^{q_i}\equiv0.
\end{align*}
\end{lemma}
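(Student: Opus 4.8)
\medskip
\noindent\textbf{Proof plan (via the Kadison--Dubois/Krivine representation theorem).}
Since this is the classical Krivine--Stengle certificate, the plan is to deduce it from the abstract representation theorem for \emph{archimedean semirings}: one sets up the correct cone of ``denominator-free'' certificates, checks that it is archimedean, and then reads off the conclusion from the representation theorem. The only substantive step is verifying archimedean-ness, and this is exactly where the hypothesis in Assumption~\ref{assumption1}\,(i) that $\{1,f_1,\ldots,f_m\}$ generates $\R[\bx]$ is used.

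First I would work in the commutative $\R$-algebra $A:=\R[\bx]$ and introduce the semiring
\begin{align*}
T:=\Bigl\{\ \sum_{\p,\q\in\N^m}c_{\p,\q}\,\prod_{i=1}^m f_i^{p_i}(1-f_i)^{q_i}\ :\ c_{\p,\q}\geq 0,\ \text{only finitely many nonzero}\ \Bigr\}.
\end{align*}
By construction $T$ is closed under addition, closed under multiplication (the product of two generating monomials is again such a monomial with exponents added), contains $\R_{\geq0}$ (take $\p=\q=\bze$), and contains every $f_i$ and every $1-f_i$. Thus the conclusion of the lemma is precisely the assertion that $f_0\in T$.

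The crucial step is to show that $T$ is archimedean, i.e.\ that for each $g\in A$ there is $N\in\N$ with $N-g\in T$. To this end I would consider the set $S:=\{\,g\in A:\exists\,N\in\N \text{ with } N+g\in T \text{ and } N-g\in T\,\}$ of $T$-bounded elements. A short computation shows $S$ is a subring of $A$ (for products, note $(N_a-a)(N_b-b)+(N_a+a)(N_b+b)=2(N_aN_b+ab)\in T$, and likewise $N_aN_b-ab\in T$); moreover $\R\subseteq S$ and each $f_i\in S$, since both $f_i$ and $1-f_i$ lie in $T$. As $\{1,f_1,\ldots,f_m\}$ generates $\R[\bx]$, we get $S=A$, which is exactly archimedean-ness of $T$. (Incidentally, this already forces ${\rm\bf K}$ to be bounded, consistently with the compactness assumed.)

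Finally I would invoke the Kadison--Dubois/Krivine representation theorem for archimedean semirings (see, e.g., \cite{Lasserre2010,Krivine1964,Stengle1974} and standard references on positive polynomials): the set $X_T$ of ring homomorphisms $\varphi\colon A\to\R$ with $\varphi(T)\subseteq\R_{\geq0}$ is nonempty and compact in the topology of pointwise convergence, and any $a\in A$ with $\varphi(a)>0$ for all $\varphi\in X_T$ already belongs to $T$ (compactness upgrades strict positivity to $\varphi(a)\geq\varepsilon$ uniformly, and the ``$\varphi(a)\geq0$'' form of the theorem then yields $a\in T$). It remains to identify $X_T$ with ${\rm\bf K}$: since $\R_{\geq0}\subseteq T$, any such $\varphi$ fixes $\R$ (squeeze $\varphi(q)$ between rationals $<q$ and $>q$), hence $\varphi$ is evaluation at $\bx^{\ast}:=(\varphi(x_1),\ldots,\varphi(x_n))\in\R^n$; and $\varphi(T)\subseteq\R_{\geq0}$ holds if and only if $0\leq f_i(\bx^{\ast})\leq1$ for $i=1,\ldots,m$, i.e.\ $\bx^{\ast}\in{\rm\bf K}$. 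Therefore the hypothesis $f_0>0$ on ${\rm\bf K}$ is exactly $\varphi(f_0)>0$ for all $\varphi\in X_T$, and the theorem gives $f_0\in T$, which is the claimed identity with finitely many nonnegative $c_{\p,\q}$. The main obstacle is the archimedean verification $S=A$; beyond that, the deepest input is the proof of the representation theorem itself (a Zorn's-lemma separation argument over archimedean ordered fields), which I would simply quote.
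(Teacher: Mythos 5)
The paper does not actually prove Lemma~\ref{KS_positivity}: it is stated as a classical result and attributed to \cite{Krivine1964,Stengle1974,Lasserre2010}, so there is no in-paper argument to compare against. Your proposal supplies the standard proof of exactly this form of the positivstellensatz, and it is correct in all essentials: the cone $T$ of finite nonnegative combinations of the products $\prod_i f_i^{p_i}(1-f_i)^{q_i}$ is a preprime containing $\R_{\geq 0}$ and all $f_i$, $1-f_i$; the set $S$ of $T$-bounded elements is a subring (your product identity $(N_a-a)(N_b-b)+(N_a+a)(N_b+b)=2(N_aN_b+ab)$ is the right computation) containing $1$ and each $f_i$, so the generation hypothesis in Assumption~\ref{assumption1}\,(i) gives $S=\R[\bx]$, i.e.\ $T$ is archimedean; the characters of $\R[\bx]$ nonnegative on $T$ are precisely the evaluations at points of ${\rm\bf K}$; and the Kadison--Dubois/Krivine representation theorem for archimedean preprimes then converts $f_0>0$ on ${\rm\bf K}$ into $f_0\in T$. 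Two minor points: the nonemptiness of ${\rm\bf K}$ in Assumption~\ref{assumption1}\,(i) is what rules out the degenerate case $X_T=\emptyset$ (where the theorem would instead give $-1\in T$), and your parenthetical reduction from the strict-positivity form to a ``$\varphi(a)\geq 0$'' form is stated a little loosely --- the clean route is to note $\widehat{f_0}\geq\varepsilon$ on the compact set $X_T$ and apply the representation theorem to $f_0-\varepsilon/2$, since an archimedean preprime need not be closed. Neither point affects the validity of the argument.
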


From now on, for each $i=0,1,\ldots,m,$ we let $s_i(\bx):=\sum_{j=1}^nu_i^j(x_j)$ for all $\bx\in\R^n,$ with univariate polynomials $u_i^j$ in variable $x_j$ and denote by
\begin{itemize}
\item $d_i^j$ the smallest positive integer such that $2d_i^j\geq\deg (u_i^j)$ for $j=1,\ldots,n$;
\item $r_i$ the smallest positive integer such that $2r_i\geq\deg(l_i)$;
\item $d_j,$ $j=1,\ldots,n,$ and $r$ the positive integers such that
\begin{align*}
d_j\geq\max_{i=0,1,\ldots,m}d^j_i, \ j=1,\ldots,n, \ \textrm{and}\  r\geq\max_{i=0,1,\ldots,m}r_i.
\end{align*}
\end{itemize}
Observe that, since we consider SPLD polynomials, one always has $\max\limits_{j=1,\ldots,n}d_j > r.$

\subsection{BSOS-SPLD: Bounded degree SOS hierarchies for SPLD polynomial optimization}
\label{subsection3.1}

In this subsection, we address SPLD polynomial optimization problems using a modified version of the BSOS hierarchy, tailored to the SPLD structure.
For any fixed $k\in\N,$ we first consider the following problem:
\begin{align}\label{Pk}
\inf\limits_{\bx\in\R^n} \left\{ f_0(\bx) \colon  h_{\p,\q}(\bx)\geq 0, \ \forall (\p,\q)\in N_k \right\}, \tag{${\rm P}_k$}
\end{align}
where $N_k:=\{(\p,\q)\in\N^m\times\N^m : |\p|+|\q|\leq k\}$ and  $h_{\p,\q}\colon\R^n\to\R$ is the polynomial defined by
\begin{align*}
h_{\p,\q}(\bx):=\prod_{i=1}^m{f}_i(\bx)^{p_i}(1-{f}_i(\bx))^{q_i}
\end{align*}
for all $\bx\in \R^n.$ The problem \eqref{Pk} is equivalent to the problem \eqref{P} since \eqref{Pk} is obtained by adding redundant constraints, which are identically nonnegative over the feasible set of the problem \eqref{P}; see \cite[Section 2.3]{Lasserre2017} for more details.

Now, we consider the Lagrangian dual of the problem \eqref{Pk}:
\begin{align}\label{Dk}
\sup\limits_{\substack{\mu\in\R,c_{\p,\q}\geq0}}\left\{\mu : f_0-\sum_{(\p,\q)\in N_k}c_{\p,\q}h_{\p,\q}-\mu\geq0, \ \forall \bx\in\R^n\right\}.\tag{${\rm D}_k$}
\end{align}
By weak duality, we have ${\rm val}\eqref{P}={\rm val}\eqref{Pk}\geq{\rm val}\eqref{Dk}.$

To proceed, for fixed $\bfd:=(d_1,\ldots,d_n)\in\N^n$ and $r\in\N,$ we consider the $k$-th hierarchy of SOS relaxation for the problem \eqref{P} as follows:
\begin{align}\label{Dkdr}
\sup\limits_{\substack{\mu\in\R,c_{\p,\q}\geq0, \sigma\in\Sigma[\bx]_{r}, \sigma_j\in\Sigma[x_j]_{d_j} }}\left\{ \mu \colon f_0-\sum_{(\p,\q)\in N_k}c_{\p,\q}h_{\p,\q}-\mu=\sigma+\sum_{j=1}^n\sigma_j\right\}. \tag{${\rm D}_k^{\bfd,r}$}
\end{align}
It follows that ${\rm val}\eqref{P}\geq{\rm val}\eqref{Dk}\geq{\rm val}\eqref{Dkdr}$, since in the problem \eqref{Dkdr}, the term $\sigma+\sum_{j=1}^n\sigma_j$ is nonnegative.

In practical implementations of the equation in the problem \eqref{Dkdr}, there are two primary approaches for handling polynomial equalities within the resulting semidefinite program.
Namely, given two polynomials $p, q \in \mathbb{R}[\bx]_d$, one can assert their equality using one of the following two strategies:
The first is to \emph{equate their coefficients}, typically by requiring $p_\gamma = q_\gamma$ for all $\gamma \in \mathbb{N}^n_d$ in the monomial basis.
The second is to \emph{equate their values}, by evaluating the polynomials at generic sample points, typically drawn at random from a bounded domain such as $[-1, 1]^n$.

Each of the two approaches presents certain challenges.
When coefficients are equated, it necessitates expanding polynomials like $g_j$ and $(1 - g_j)$, which may introduce numerical instability due to the growth of binomial coefficients, particularly at higher relaxation orders $d$.
This can severely affect the conditioning of the resulting systems.
Alternatively, enforcing equality by sampling values may also lead to ill-conditioned linear systems, especially if the evaluation points or the structure of the polynomials result in nearly linearly dependent constraints.

In our implementation of the BSOS-SPLD relaxation, we enforce the polynomial equalities in the problem~\eqref{Dkdr} by equating coefficients directly.
Since the number of variables is kept relatively small while we intend to deal with polynomials of high-degree, this choice is reasonable in the present context.
Still, it should be noted that at higher relaxation orders, the size and complexity of the coefficient comparison can grow substantially, potentially resulting in numerical difficulties similar to those mentioned earlier.

Let $\dd$ be the smallest positive integer such that
\begin{equation*}
2\dd\geq\max\{\deg(f_0),\ k\max_{1\leq i\leq m}\{\deg (f_i)\}\}.
\end{equation*}
Let $\sigma\in\Sigma[\bx]_{r},$ and let $\sigma_j\in\Sigma[x_j]_{d_j},$ $j=1,\ldots,n.$
Then it follows from Proposition~\ref{proposition1} that there exist matrices $X\in \mathcal{S}^{s(n,r)}_+$  and $X_j\in \mathcal{S}^{d_j+1}_+,$ $j=1,\ldots,n,$ such that
\begin{align*}
\sigma(\bx)&=\langle\lceil \bx\rceil_r\lceil \bx\rceil_r^T,X\rangle=\sum_{\ba\in \N^n_{2\dd}}\bx^\ba\langle B_\ba,X\rangle,\\
\sigma_j(x_j)&=\langle\lceil x_j\rceil_{d_j}\lceil x_j\rceil_{d_j}^T,X_j\rangle=\sum_{\substack{\ba\in \N^n_{2\dd}}}\bx^{\ba}\langle B_{\ba}^j,X_j\rangle,
\end{align*}
where $B_\ba$ and each $B_{\ba}^j$ are symmetric matrices defined by
\begin{equation}\label{Ba1}
B_\ba:=\left(\lceil \bx\rceil_r\lceil \bx\rceil_r^T\right)_\ba \textrm{ and } B_{\ba}^j:=\left(\lceil x_j\rceil_{d_j}\lceil x_j\rceil_{d_j}^T\right)_{\ba}
\end{equation}
for all $\ba\in \N^n_{2\dd}.$
With this fact, we now reformulate the $k$-th hierarchy \eqref{Dkdr} as the following semidefinite programming:
\begin{align*}
\sup\limits_{\substack{\mu,c_{\p,\q}, X, X_j}} \bigg\{ \mu\in\R : \, & \bigg(f_0-\sum_{(\p,\q)\in N_k}c_{\p,\q}h_{\p,\q}-\mu\bigg)_\ba=\langle B_\ba, X\rangle+\sum_{j=1}^n\langle B_{\ba}^j, X_j\rangle, \ \forall \ba\in \N^n_{2\dd},\\
&c_{\p,\q}\geq0, \ (\p,\q)\in N_k, \  X\in \mathcal{S}^{{s(n,r)}}_+, \  X_j\in \mathcal{S}^{d_j+1}_+, \  j=1,\ldots,n\bigg\}.\notag
\end{align*}

The Lagrangian dual of the above semidefinite program, also an SDP, is given by the following problem:
\begin{align}\label{LDkdr}
\inf\limits_{\substack{\by\in\R^{s(n,2\dd)}}}\quad & \sum_{\ba\in\N^n_{2\dd}}(f_0)_\ba y_\ba \tag{$\widetilde{\rm D}_k^{\bfd,r}$} \\
 {\rm s.t. } \qquad\ \ &  \sum_{\ba\in\N^n_{2\dd}}(h_{\p,\q})_\ba y_\ba\geq0, \  (\p,\q)\in N_k, \nonumber \\
 &\sum_{\ba\in\N^n_{2\dd}}y_\ba (\lceil x_j\rceil_{d_j}\lceil x_j\rceil_{d_j}^T)_\ba\succeq0, \ j=1,\ldots,n, \nonumber \\
 &\sum_{\ba\in\N^n_{2\dd}}y_\ba (\lceil \bx\rceil_r\lceil \bx\rceil_r^T)_\ba\succeq0,  \nonumber \\
 &y_{\bze}=1.\nonumber
\end{align}

The following results show that the sequence of the optimal values associated with the problems \eqref{Dkdr} and \eqref{LDkdr} asymptotically converges to the global minimum of the problem \eqref{P}.
This result can in fact be derived from \cite[Theorem~2]{Lasserre2017}.
Nevertheless, in order to reflect the structured and computational distinctions in the SOS representation adopted in the BSOS-SPLD hierarchy, we will provide a full proof for completeness.
\begin{theorem}\label{thm1}
Let $\bfd\in\N^n$ and $r\in\N$ be fixed.
Suppose that {\rm Assumption~\ref{assumption1} (i)} holds. Then$,$ ${\rm val}\eqref{Dkdr}\leq{\rm val}\eqref{LDkdr}\leq{\rm val}\eqref{P}$ for all $k\in\N,$ and
\begin{align*}
\lim_{k\to\infty}{\rm val}\eqref{Dkdr}=\lim_{k\to\infty}{\rm val}\eqref{LDkdr}={\rm val}\eqref{P}.
\end{align*}
\end{theorem}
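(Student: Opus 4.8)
The plan is to establish the chain of inequalities first and then the limit. For the inequalities, I would argue that any feasible triple $(\mu, c_{\p,\q}, \sigma, \sigma_j)$ of \eqref{Dkdr} yields, via the defining identity $f_0 - \sum_{(\p,\q)\in N_k} c_{\p,\q} h_{\p,\q} - \mu = \sigma + \sum_{j=1}^n \sigma_j$, a polynomial that is nonnegative on all of $\R^n$ (since SOS polynomials are nonnegative), hence in particular $\mu$ is a valid lower bound for $f_0$ over ${\rm\bf K}$ after noting that each $h_{\p,\q}$ is nonnegative on ${\rm\bf K}$; this gives ${\rm val}\eqref{Dkdr} \le {\rm val}\eqref{P}$. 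The inequality ${\rm val}\eqref{Dkdr} \le {\rm val}\eqref{LDkdr}$ is weak duality between the SDP reformulation of \eqref{Dkdr} and its Lagrangian dual \eqref{LDkdr}, and ${\rm val}\eqref{LDkdr} \le {\rm val}\eqref{P}$ follows because, given any $\bx_0 \in {\rm\bf K}$, the moment vector $\by := \lceil \bx_0 \rceil_{2\dd}$ (i.e.\ $y_\ba = \bx_0^\ba$) is feasible for \eqref{LDkdr}—the moment matrices are rank-one PSD, the linear constraints read $h_{\p,\q}(\bx_0) \ge 0$ which holds on ${\rm\bf K}$, and the objective equals $f_0(\bx_0)$—so ${\rm val}\eqref{LDkdr} \le \inf_{\bx_0 \in {\rm\bf K}} f_0(\bx_0) = {\rm val}\eqref{P}$.

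For the limit, the key is to show $\liminf_{k\to\infty} {\rm val}\eqref{Dkdr} \ge {\rm val}\eqref{P}$, since the reverse inequality already follows from the monotone chain above. Fix $\varepsilon > 0$ and set $f_0^\varepsilon := f_0 - {\rm val}\eqref{P} + \varepsilon$, which is strictly positive on ${\rm\bf K}$. By Assumption~\ref{assumption1}(i) and Lemma~\ref{KS_positivity} (Krivine--Stengle), there exist finitely many nonnegative coefficients $\bar c_{\p,\q}$ (nonzero only for $(\p,\q)$ in some finite set, say with $|\p|+|\q| \le k_0$) such that
\begin{align*}
f_0 - {\rm val}\eqref{P} + \varepsilon - \sum_{(\p,\q)} \bar c_{\p,\q}\, h_{\p,\q} \equiv 0.
\end{align*}
Then for every $k \ge k_0$, the choice $\mu = {\rm val}\eqref{P} - \varepsilon$, $c_{\p,\q} = \bar c_{\p,\q}$, $\sigma = 0$, $\sigma_j = 0$ makes the right-hand side of the identity in \eqref{Dkdr} equal to zero, which matches the left-hand side exactly; hence this is a feasible point of \eqref{Dkdr} and ${\rm val}\eqref{Dkdr} \ge {\rm val}\eqref{P} - \varepsilon$. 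Since $\varepsilon > 0$ was arbitrary and ${\rm val}\eqref{Dkdr}$ is nondecreasing in $k$ (enlarging $N_k$ only adds more available multipliers), we conclude $\lim_{k\to\infty} {\rm val}\eqref{Dkdr} = {\rm val}\eqref{P}$. The same squeeze, combined with ${\rm val}\eqref{Dkdr} \le {\rm val}\eqref{LDkdr} \le {\rm val}\eqref{P}$, forces $\lim_{k\to\infty} {\rm val}\eqref{LDkdr} = {\rm val}\eqref{P}$ as well.

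The main subtlety I anticipate is not in the Positivstellensatz step itself but in checking that the SOS terms $\sigma$ and $\sigma_j$ genuinely do no harm: one must confirm that setting them to zero is admissible, i.e.\ that $0 \in \Sigma[\bx]_r$ and $0 \in \Sigma[x_j]_{d_j}$, and more importantly that the degree budget $2\dd$ is large enough that the identity in \eqref{Dkdr} is an identity of polynomials of degree at most $2\dd$ — this is where the definition of $\dd$ as the smallest integer with $2\dd \ge \max\{\deg(f_0),\, k\max_i \deg(f_i)\}$ is used, since $\deg(h_{\p,\q}) \le k \max_i \deg(f_i)$ for $(\p,\q) \in N_k$. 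One should also be slightly careful that Lemma~\ref{KS_positivity} is applied to $f_0^\varepsilon$ rather than $f_0$ (which need not be positive on ${\rm\bf K}$), and that the resulting representation, which is a priori only an algebraic identity, is compatible with the coefficient-matching formulation of \eqref{Dkdr}; both points are routine once the degree bookkeeping is in place. Finally, a remark that feasibility of \eqref{LDkdr} for every $k$ (via the rank-one moment vector above) is what guarantees the middle quantity is well-defined and finite, closing the argument.
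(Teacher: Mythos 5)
Your proposal is correct and follows essentially the same route as the paper: weak SDP duality for the first inequality, the rank-one moment vector $\by=(\bx^{\ba})$ built from a feasible point of \eqref{P} for the second, and an application of Krivine--Stengle's positivstellensatz to $f_0-{\rm val}\eqref{P}+\varepsilon$ with $\sigma=\sigma_j=0$ (the paper phrases this via an auxiliary quantity $\mu_k$, but it is the same feasible-point construction) for the convergence. Your degree-bookkeeping remarks match the role of $\dd$ in the paper's formulation, so nothing is missing.
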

\begin{proof}
The proof is given in the Appendix.
\end{proof}

At this point, we would like to mention that strong duality holds between the problems \eqref{Dkdr} and \eqref{LDkdr} under Assumption~\ref{assumption1}, i.e.,
\begin{align*}
{\rm val}\eqref{Dkdr}={\rm val}\eqref{LDkdr}
\end{align*}
for every $k\in\ \N,$ and the problem \eqref{Dkdr} has an optimal solution; see e.g., \cite[Lemma~1]{Lasserre2017}.

\begin{remark}[Comparison with the standard BSOS hierarchy \cite{Lasserre2017}]{\rm
In the standard BSOS hierarchy~\cite{Lasserre2017}, the semidefinite constraint in each relaxation involves a single SOS polynomial $\sigma\in\Sigma[\bx]_d$, so the associated PSD matrix has a fixed size $s(n,d)$, where $2d$ is typically chosen to be at least the largest degree among all involved polynomials.
In contrast, the BSOS-SPLD hierarchy~\eqref{Dkdr} replaces $\sigma$ by the structured decomposition
\begin{equation*}
\sigma+\sum_{j=1}^n \sigma_j, \quad \sigma\in\Sigma[\bx]_r,\ \sigma_j\in\Sigma[x_j]_{d_j},
\end{equation*}
so that the lower-degree part is handled by a single multivariate block while the high-degree separable terms are treated via smaller univariate blocks.
This may lead to substantially smaller PSD matrices in the resulting SDP formulations; see Section~\ref{sect:4} for the concrete choices of $(r,d_1,\ldots,d_n)$ used in the numerical experiments.
}\end{remark}

The following result provides a sufficient condition for finite convergence of the BSOS-SPLD hierarchy and guarantees that at least one minimizer can be extracted.
The proof follows a similar line of reasoning as in \cite[Lemma~1]{Lasserre2017}.
However, due to the structured modifications presented in the BSOS-SPLD hierarchy, and with a slight abuse of notation, we will give a full proof for the sake of completeness.
\begin{theorem}[{cf. \cite[Lemma~1]{Lasserre2017}}]\label{thm2}
Let $\bfd\in\N^n$ and $r\in\N$ be fixed$,$ and let ${\bar s}\in \N$ and ${\bar l}\in\N$ be smallest integers such that $2\bar s\geq \max_{0\leq i\leq m}\{\deg (s_i)\}$ and $2\bar l\geq \max_{0\leq i\leq m}\{\deg (l_i)\},$ respectively.
Assume that $\overline \by \in\R^{s(n,2\dd)}$ is an optimal solution to the problem \eqref{LDkdr}.
If{\small
\begin{equation}\label{rel1_thm2}
\max\left\{\left\{{\rm rank}\left(\sum_{\ba\in\N^n_{2\dd}}\overline y_\ba ({\lceil x_j\rceil_{\bar s}}{\lceil x_j\rceil_{\bar s}}^T)_\ba\right)\right\}_{1\leq j\leq n}, \ {\rm rank}\left(\sum_{\ba\in\N^n_{2\dd}}\overline y_\ba ({\lceil \bx\rceil_{\bar l}}{\lceil \bx\rceil_{\bar l}}^T)_\ba\right)\right\}=1,
\end{equation}}
then $\overline \bx:=(\overline y_\ba)_{|\ba|=1}\in\R^n$ is an optimal solution to the problem \eqref{P}.
\end{theorem}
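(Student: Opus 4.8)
The plan is to exploit the rank-one conditions in \eqref{rel1_thm2} to show that the truncated moment vector $\overline\by$ is actually the sequence of moments of a Dirac measure $\delta_{\overline\bx}$ up to the relevant degree, and then to verify that $\overline\bx$ is feasible for \eqref{P} with objective value equal to ${\rm val}\eqref{LDkdr}$, which by Theorem~\ref{thm1} forces optimality. First I would record that $2\dd\geq \max\{\deg(f_0),\,k\max_i\deg(f_i)\}$, and since each $f_i=s_i+l_i$ with $\deg(l_i)<\deg(s_i)$ and $2\bar s\geq\max_i\deg(s_i)$, $2\bar l\geq\max_i\deg(l_i)$, we have $\bar l<\bar s\leq\dd$; this guarantees that the moment matrices appearing in \eqref{rel1_thm2} are genuine principal submatrices of the full moment data carried by $\overline\by$, so working with them is legitimate.

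Next, fix $j\in\{1,\ldots,n\}$. The matrix $M_j:=\sum_{\ba\in\N^n_{2\dd}}\overline y_\ba(\lceil x_j\rceil_{\bar s}\lceil x_j\rceil_{\bar s}^T)_\ba$ is a univariate (Hankel) moment matrix in the variable $x_j$ built from the marginal moments $\overline y_{t\be_j}$, $t=0,1,\ldots,2\bar s$; since $\overline y_{\bze}=1$ and $M_j\succeq0$, rank condition $\operatorname{rank}M_j=1$ together with the standard flat-extension / rank-one characterization of moment matrices (Curto--Fialkow, or \cite[Theorem~3.11]{Lasserre2015book}) gives a point $\overline x_j\in\R$ such that $\overline y_{t\be_j}=\overline x_j^{\,t}$ for $0\leq t\leq 2\bar s$. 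In particular $\overline x_j=\overline y_{\be_j}=(\overline y_\ba)_{\ba=\be_j}$, which is exactly the $j$-th coordinate of the claimed point $\overline\bx$. Likewise, $\operatorname{rank}\big(\sum_\ba \overline y_\ba(\lceil\bx\rceil_{\bar l}\lceil\bx\rceil_{\bar l}^T)_\ba\big)=1$ yields, by the same rank-one moment argument in $n$ variables, a point $\bz\in\R^n$ with $\overline y_\ba=\bz^\ba$ for all $\ba\in\N^n_{2\bar l}$; restricting to $|\ba|=1$ shows $\bz=\overline\bx$, so $\overline y_\ba=\overline\bx^{\,\ba}$ for every $\ba\in\N^n_{2\bar l}$, and combining with the univariate statement we get $\overline y_{\ba}=\overline\bx^{\,\ba}$ for all $\ba$ that are either supported on a single coordinate with $|\ba|\le 2\bar s$ or have $|\ba|\le 2\bar l$. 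Crucially, every monomial appearing in $f_i=s_i+l_i$ is of one of these two types (separable high-degree monomials, or low-degree monomials of degree $\le 2\bar l$), so $\sum_\ba (f_i)_\ba \overline y_\ba = f_i(\overline\bx)$ for each $i=0,1,\ldots,m$.

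Now I would close the argument. The objective constraint $y_{\bze}=1$ together with the identity just derived gives $\sum_\ba (f_0)_\ba\overline y_\ba=f_0(\overline\bx)$, i.e.\ the value of \eqref{LDkdr} at $\overline\by$ equals $f_0(\overline\bx)$. For feasibility, I would not read $0\le f_i(\overline\bx)\le 1$ directly off the two PSD blocks (which only see $s_i$ and $l_i$ separately at the right degrees); instead I would use the linear constraints $\sum_\ba (h_{\p,\q})_\ba\overline y_\ba\ge 0$ of \eqref{LDkdr}. Taking $(\p,\q)=\be_i$ (one unit in the first block) gives $h_{\be_i}=f_i$, and $(\p,\q)$ with a single unit in the second block gives $h=1-f_i$; but each product $h_{\p,\q}$ has degree at most $k\max_i\deg(f_i)\le 2\dd$ and is a polynomial whose monomials, after expansion, need not be of the two special types above — so here I would instead argue that, because $\overline y_\ba=\overline\bx^{\,\ba}$ holds on all monomials of $f_i$, we get $\sum_\ba(h_{\p,\q})_\ba\overline y_\ba=\prod_i f_i(\overline\bx)^{p_i}(1-f_i(\overline\bx))^{q_i}$ only if $\overline\by$ is genuinely the moment sequence of $\delta_{\overline\bx}$ up to degree $2\dd$; establishing \emph{that} is the main obstacle, since a priori the rank-one conditions are imposed only at degrees $\bar s$ and $\bar l$, which may be strictly smaller than $\dd$. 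I expect to resolve it by the flat-extension principle: a rank-one moment (sub)matrix forces its flat extensions to remain rank one, and $\overline\by$ being a feasible point of \eqref{LDkdr} with $M_{\bar l}(\overline\by)$ (and each univariate $M_{\bar s}$) rank one pins down $\overline y_\ba$ on all of $\N^n_{2\dd}$ through the ambient PSD structure; once $\overline\by$ is the moment sequence of $\delta_{\overline\bx}$, the constraints $\sum_\ba(h_{\be_i})_\ba\overline y_\ba=f_i(\overline\bx)\ge0$ and the analogous $1-f_i(\overline\bx)\ge0$ give $\overline\bx\in{\rm\bf K}$, and $f_0(\overline\bx)={\rm val}\eqref{LDkdr}\le{\rm val}\eqref{P}$ combined with $\overline\bx\in{\rm\bf K}$ yields ${\rm val}\eqref{P}\le f_0(\overline\bx)\le{\rm val}\eqref{P}$, so $\overline\bx$ is optimal.
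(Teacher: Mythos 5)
Your second paragraph already contains everything the paper needs, and the ``main obstacle'' you then identify in the third paragraph is illusory: it arises only because you switch from the specific constraints you actually need to general products $h_{\p,\q}$. The paper's proof uses the linear constraints of \eqref{LDkdr} \emph{only} for $(\p,\q)\in N_k$ with $|\p|+|\q|=1$, i.e.\ $h_{\p,\q}=f_i$ and $h_{\p,\q}=1-f_i$. For these, every monomial of $h_{\p,\q}$ is either a separable monomial of degree at most $2\bar s$ (coming from $s_i$) or a monomial of degree at most $2\bar l$ (coming from $l_i$) --- exactly the two families on which you have already established $\overline y_\ba=\overline\bx^{\,\ba}$. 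Hence $\sum_\ba (f_i)_\ba\overline y_\ba=s_i(\overline\bx)+l_i(\overline\bx)=f_i(\overline\bx)\ge 0$ and $1-f_i(\overline\bx)\ge 0$ follow directly, giving $\overline\bx\in{\rm\bf K}$; combined with $\sum_\ba(f_0)_\ba\overline y_\ba=f_0(\overline\bx)$ and the chain ${\rm val}\eqref{P}\ge{\rm val}\eqref{LDkdr}=f_0(\overline\bx)\ge{\rm val}\eqref{P}$ from Theorem~\ref{thm1}, the conclusion is immediate. No identity of the form $\sum_\ba(h_{\p,\q})_\ba\overline y_\ba=\prod_i f_i(\overline\bx)^{p_i}(1-f_i(\overline\bx))^{q_i}$ for higher-order products is ever required.

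The flat-extension detour you propose to close the (nonexistent) gap is moreover not sound as stated: \eqref{LDkdr} contains no PSD constraint on a full moment matrix of order $\dd$ --- only the multivariate block of order $r$ and the univariate blocks of order $d_j$ --- so the mixed monomials $\overline y_\ba$ with $2\bar l<|\ba|\le 2\dd$ that involve two or more variables appear only in linear constraints and are not ``pinned down'' by any ambient PSD structure. The rank-one hypotheses at orders $\bar s$ and $\bar l$ therefore cannot be propagated to conclude that $\overline\by$ is the moment sequence of $\delta_{\overline\bx}$ up to degree $2\dd$, and indeed the theorem does not claim or need this. Deleting the third paragraph's detour and finishing with the degree-one constraints, as above, yields exactly the paper's argument.
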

\begin{proof}
See the Appendix for the proof.
\end{proof}

Note that the condition \eqref{rel1_thm2} can be replaced by the following standard rank-1 condition
\begin{equation*}
{\rm rank}\left(\sum_{\ba\in\N^n_{2\dd}}\overline y_\ba ({\lceil \bx\rceil_{\bar s}}{\lceil \bx\rceil_{\bar s}}^T)_\ba\right)=1,
\end{equation*}
as stated in \cite[Lemma~1]{Lasserre2017}.
The condition \eqref{rel1_thm2} may be used to reduce the computational cost of certifying optimality, compared with checking the standard rank-1 condition.
Moreover, if at some relaxation order the optimal moment matrix satisfies the standard flat extension (flat truncation) condition, together with the usual positive semidefiniteness conditions for the moment and localizing matrices, then the BSOS-SPLD relaxation is exact at that order and the hierarchy exhibits finite convergence.
In this case, the flat extension condition guarantees the existence of a finitely atomic representing measure for the optimal truncated moment sequence, with support contained in the feasible set (as certified by the localizing conditions).
Since the relaxation is exact, these support points are global minimizers and can be extracted by standard linear-algebra procedures, with their number equal to the rank of the moment matrix at the order where the flat extension condition holds (see, e.g., \cite[Lemma~1]{Lasserre2017}).
Indeed, a flat extension–type condition tailored to SPLD polynomial problems can be used to certify optimality at lower computational cost.

\section{Numerical experiments}\label{sect:4}

In this section, we present several numerical examples to illustrate the proposed bounded degree hierarchies.
All numerical experiments were conducted on a desktop PC running Windows 11 (64-bit) with an Intel Core i5-10400F (2.90GHz) CPU and 16\,GB RAM, using MATLAB R2023a with YALMIP \cite{Lofberg2004} (release R20250626) and the SDP solver MOSEK \cite{MOSEK2025} (version 11.0.29).
All runs were performed using standard YALMIP settings and default MOSEK parameter settings.

The tables (Table~\ref{table1}, Table~\ref{table2}, and Table~\ref{table3}) in this section provide the following information:
\begin{itemize}
\item $(d, k)$ stands for fixed degree $d$ and relaxation order $k$ in the BSOS hierarchy; and $(d_0,r,k)$ means fixed degrees $d_0$ for the separable term, $r$ for the lower-degree term, and relaxation order $k$ in the BSOS-SPLD hierarchy.
\item The abbreviation $\verb"Opt"$ denotes the optimal value obtained by the BSOS hierarchy and BSOS-SPLD hierarchy at order $k.$
\item $\verb"Time"$ denotes the average computation time (in seconds) over 10 runs, including the time for variable declaration in YALMIP, SDP formulation and solving, and verification of the optimality condition.
\item The abbreviations $\verb"ms"$ and $\verb"max_ms"$ refer to the size of the moment matrix $\M_d(\by)$ in BSOS hierarchy and the maximum size over the moment matrix $\M_r(\by)$ and all moment matrices $\M_{d_0}(\by^j)$ in BSOS-SPLD hierarchy, respectively.
\item The abbreviation $\verb"rk"$ refers to the rank of the moment matrix in BSOS hierarchy. The abbreviation $\verb"max_rk"$ denotes the maximum rank among the moment matrices involved in \eqref{rel1_thm2} of the BSOS-SPLD hierarchy\footnote{We declare that the matrix has numerical rank-one if the largest eigenvalue for each matrix is at least $10^4$ times larger than the second largest eigenvalue.}.
\item A dash ``$-$'' indicates that the problem could not be solved within the available time/memory limits (out of memory).
\end{itemize}
At this point, we would like to note that the reported computation times can depend on the SDP solver and the implementation used to formulate the SDP.
Accordingly, alternative solvers (e.g., SDPNAL+ \cite{Sun2020} or SDPT3 \cite{Toh1999}) may yield different computation times, while the structured benefit of BSOS-SPLD is expected to remain.

In all experiments we adopt the following simple and practical rule for choosing the degrees $r$ and $d_1,\ldots,d_n$ in the BSOS-SPLD hierarchy.
\begin{itemize}
    \item We fix $r$ to be at least the degree of the lower-degree polynomial term.
    \item Then, for each $j$, we choose $d_j$ as follows:
    \begin{itemize}
        \item[{\rm (i)}] If $s(n, r)$ is greater than or equal to the degree of the separable polynomial in variable $x_j,$ then set $d_j + 1 = s(n, r).$
        In a few instances we allow some $d_j$ to be slightly larger than $s(n,r)$, which increases the PSD size but may yield the optimal value at a lower relaxation order $k$ (see, e.g., the choices $d_0=27$ and $d_0=39$ for problem ${\rm P}_{6,8}$ in Table~\ref{table1}).
        \item[{\rm (ii)}] Otherwise$,$ choose $d_j$ to be greater than or equal to the degree of the separable polynomial in $x_j.$
    \end{itemize}
\end{itemize}
This choice simplifies the degree selection process while keeping the size of the semidefinite constraints fixed throughout the hierarchy, thereby limiting the impact of degree selection on the overall computational cost.

\subsection{BSOS versus BSOS-SPLD}

We consider the following family of nonconvex polynomial optimization problems \cite{Lasserre2017}:
\begin{align}\label{Pnd}
\min_{\bx\in \mathbb{R}^n} \quad & f_{n,q}(\bx) \tag{${\rm P}_{n,q}$} \\
{\rm s.t. } \quad \ & 0\leq f_{i}(\bx)\leq 1,\ i=1,\ldots,5,\notag\\
& 0\leq x_j\leq 1, \  j=1,\ldots,n.\notag
\end{align}
where $n$ and $q$ are even integers, and the objective is
\begin{equation*}
f_{n,q}(\bx)=\sum_{j=1}^{n}(-1)^{j+1}x_j^{q}+x_1-x_2,
\end{equation*}
and the constraint polynomials $f_i,$ $i=1,\ldots,5,$ are given by
\begin{align*}
f_{1}(\bx) &= \sum_{p=1}^{n/2} \big(2x_{2p-1}^{q}+3x_{2p}^{2}+2x_{2p-1} x_{2p}\big), \ \ f_{2}(\bx) = \sum_{p=1}^{n/2} \big(3x_{2p-1}^{2}+2x_{2p}^{2}-4x_{2p-1} x_{2p}\big),\\
f_{3}(\bx) &= \sum_{p=1}^{n/2} \big(x_{2p-1}^{2}+6x_{2p}^{2}-4x_{2p-1} x_{2p}\big), \ \  f_{4}(\bx) = \sum_{p=1}^{n/2} \big(x_{2p-1}^{2}+4x_{2p}^{q}-3x_{2p-1} x_{2p}\big),\\
f_{5}(\bx) &= \sum_{p=1}^{n/2} \big(2x_{2p-1}^{2}+5x_{2p}^{2}+3x_{2p-1} x_{2p}\big).
\end{align*}

\begin{table}[t]
\caption{Comparison of BSOS and BSOS-SPLD for several problems~\eqref{Pnd}}
\label{table1}
\centering{\footnotesize
\begin{tabular}{ccccccccccc}
\toprule
\multicolumn{1}{l}{\textbf{Problem}} & \multicolumn{5}{l}{\textbf{BSOS}}  & \multicolumn{4}{l}{\textbf{BSOS-SPLD}} \\
\cmidrule(rl){2-6} \cmidrule(rl){7-11}
\eqref{Pnd}         & $(d,k)$ & $\verb"Opt"$ & $\verb"Time"$ & $\verb"ms"$ & \verb"rnk" &$(d_0,r,k)$ & $\verb"Opt"$ & $\verb"Time"$ & $\verb"max_ms"$ & \verb"max_rnk" \\
\toprule
\toprule
$({\rm P}_{6,6})$   & $3,1$   & $-0.6597$    & $0.4$         & $84$        & $33$       & $27,2,1$   & $-0.6153$    & $0.2$         & $28$            & $4$            \\
                    & $3,2$   & $-0.6597$    & $0.6$         & $84$        & $25$       & $27,2,2$   & $-0.4129$    & $0.4$         & $28$            & $1$            \\
                    & $3,3$   & $-0.4129$    & $4.7$         & $84$        & $1$        &            &              &               &                 &                \\
\midrule
$({\rm P}_{6,8})$   & $4,1$   & $-0.6597$    & $5.2$         & $210$       & $55$       & $27,2,1$   & $-0.6464$    & $0.2$         & $28$            & $5$            \\
                    & $4,2$   & $-0.6597$    & $5.6$         & $210$       & $48$       & $27,2,2$   & $-0.4109$    & $0.5$         & $28$            & $5$            \\
                    & $4,3$   & $-0.6597$    & $9.9$         & $210$       & $48$       & $27,2,3$   & $-0.4090$    & $4.3$         & $28$            & $1$            \\
                    & $4,4$   & $-0.4090$    & $102.1$       & $210$       & $1$        & $39,2,1$   & $-0.6251$    & $0.3$         & $40$            & $4$            \\
                    &         &              &               &             &            & $39,2,2$   & $-0.4090$    & $0.4$         & $40$            & $1$            \\
\midrule
$({\rm P}_{6,10})$  & $5,1$   & $-0.6597$    & $33.4$        & $462$       & $89$       & $39,2,1$   & $-0.6403$    & $0.3$         & $40$            & $4$            \\
                    & $5,2$   & $-0.6597$    & $39.7$        & $462$       & $84$       & $39,2,2$   & $-0.4099$    & $0.6$         & $40$            & $5$            \\
                    & $5,3$   & $-0.6597$    & $49.2$        & $462$       & $78$       & $39,2,3$   & $-0.4084$    & $4.9$         & $40$            & $1$            \\
                    & $5,4$   & $-0.6597$    & $153.6$       & $462$       & $78$       & $61,2,1$   & $-0.5944$    & $0.4$         & $62$            & $7$            \\
                    & $5,5$   & $-$          & $-$           & $462$       & $-$        & $61,2,2$   & $-0.4084$    & $0.6$         & $62$            & $1$            \\
\midrule
\midrule
$({\rm P}_{8,6})$   & $3,1$   & $-0.6597$    & $3.9$         & $165$       & $54$       & $44,2,1$   & $-0.5547$    & $0.5$         & $45$            & $5$            \\
                    & $3,2$   & $-0.6597$    & $4.6$         & $165$       & $64$       & $44,2,2$   & $-0.4129$    & $0.6$         & $45$            & $1$            \\
                    & $3,3$   & $-0.4129$    & $15.4$        & $165$       & $1$        &            &              &               &                 &                \\
\midrule
$({\rm P}_{8,8})$   & $4,1$   & $-0.6597$    & $106.4$       & $495$       & $114$      & $44,2,1$   & $-0.6028$    & $0.5$         & $45$            & $5$            \\
                    & $4,2$   & $-0.6597$    & $97.1$        & $495$       & $125$      & $44,2,2$   & $-0.4090$    & $0.9$         & $45$            & $1$            \\
                    & $4,3$   & $-0.6597$    & $133.6$       & $495$       & $125$      &            &              &               &                 &                \\
                    & $4,4$   & $-0.4090$    & $558.1$       & $495$       & $1$        &            &              &               &                 &                \\
\midrule
$({\rm P}_{8,10})$  & $5,1$   & $-$          & $-$           & $1287$      & $-$        & $61,2,1$   & $-0.5979$    & $0.6$         & $62$            & $5$            \\
                    & $5,2$   & $-$          & $-$           & $1287$      & $-$        & $61,2,2$   & $-0.4093$    & $1.9$         & $62$            & $1$            \\
\midrule
\midrule
$({\rm P}_{10,6})$  & $3,1$   & $-0.6597$    & $28.0$        & $286$       & $97$       & $65,2,1$   & $-0.5112$    & $1.4$         & $66$            & $6$            \\
                    & $3,2$   & $-0.6597$    & $33.1$        & $286$       & $115$      & $65,2,2$   & $-0.4129$    & $1.4$         & $66$            & $1$            \\
                    & $3,3$   & $-0.4129$    & $63.7$        & $286$       & $1$        &            &              &               &                 &                \\
\midrule
$({\rm P}_{10,8})$  & $4,1$   & $-$          & $-$           & $1001$      & $-$        & $65,2,1$   & $-0.5466$    & $1.2$         & $66$            & $6$            \\
                    & $4,2$   & $-$          & $-$           & $1001$      & $-$        & $65,2,2$   & $-0.4090$    & $3.2$         & $66$            & $1$            \\
\midrule
$({\rm P}_{10,10})$ & $5,1$   & $-$          & $-$           & $3003$      & $-$        & $65,2,1$   & $-0.5895$    & $1.7$         & $66$            & $6$            \\
                    & $5,2$   & $-$          & $-$           & $3003$      & $-$        & $65,2,2$   & $-0.4084$    & $12.5$        & $66$            & $1$            \\
\bottomrule
\end{tabular}}
\end{table}

Now, we solve the $k$-th order relaxations of problem~\eqref{Pnd} using both the standard {\bf BSOS} hierarchy \cite{Lasserre2017} and the proposed {\bf BSOS-SPLD} hierarchy.

Table~\ref{table1} compares optimal values and CPU times of the relaxations.
The main observations are as follows:
\begin{enumerate}[\upshape (a)]
\item For $({\rm P}_{6,6})$, with $(d,k)=(3,3)$ the BSOS hierarchy attains $\verb"rnk"=1$ in $4.7$ seconds.
In contrast, the BSOS-SPLD hierarchy with $(d_0,r,k)=(27,2,2)$ already achieves $\verb"max_rnk"=1$ in $0.4$ second.
This indicates that, for a suitable fixed degree $d_0$, our method can reach the same optimal value as BSOS with a lower computational cost.

\item For $({\rm P}_{6,8})$, the BSOS hierarchy requires substantially more time as the relaxation order increases:
although the optimal value is obtained only at the $4$th relaxation, it takes $102.1$ seconds.
By contrast, under the BSOS-SPLD hierarchy with $d_0=27$ (which satisfies $s(3,2)=d_0+1$), the optimal value is obtained at the $3$rd relaxation in $4.3$ seconds, and increasing $d_0$ to $39$ allows the method to reach the same optimal value already at the $2$nd relaxation order in only $0.4$ seconds.
This example illustrates that a larger fixed degree $d_0$ may accelerate convergence, even at the cost of a larger semidefinite matrix size.

\item For the higher-degree case $({\rm P}_{6,10})$, the BSOS hierarchy fails to improve beyond the value $-0.6597$ for all tested relaxation orders $k=1,\ldots,4$, despite CPU times growing up to $153.6$ seconds.
In contrast, the BSOS-SPLD hierarchy with $(d_0,r,k)=(39,2,3)$ or $(61,2,2)$ reaches the optimal value in less than $5$ seconds.
Thus, even for $q=10$, the BSOS-SPLD hierarchy can attain the optimal value at low relaxation orders, while BSOS fails to do so for any of the tested relaxation orders.

\item For the examples with $n=8, 10$, the standard BSOS hierarchy quickly becomes numerically intractable when $q$ increases.
For $({\rm P}_{8,6})$ the $3$rd relaxation (with $d=3$) is still solvable, but already requires $15.4$ seconds.
For $({\rm P}_{8,8})$ the $4$th relaxation (with $d=4$) takes $558.1$ seconds.
For $({\rm P}_{8,10})$, $({\rm P}_{10,8}),$ and $({\rm P}_{10,10})$, the BSOS relaxations cannot be completed because the solver runs out of memory.
In all of these cases, the BSOS-SPLD hierarchy still reaches the optimal value at $2$nd relaxation order.

\item
Overall, Table~\ref{table1} suggests that the computational behaviour of the BSOS-SPLD hierarchy is relatively insensitive to the highest degree $q$ of the separable terms, but more sensitive to the dimension $n$.
For fixed $n$ (e.g., $n=6$), increasing $q$ from $6$ to $10$ only causes a moderate increase in the BSOS-SPLD runtimes, whereas for fixed $q$ the CPU time grows more noticeably when $n$ increases from $6$ to $8$ and $10$.
In contrast, the standard BSOS hierarchy is highly sensitive both to $n$ and to $q$: as either the number of variables or the degree increases, the size of the semidefinite matrices grows rapidly that the SDP relaxations become numerically intractable.

In addition, higher relaxation orders significantly increase not only the cost of solving the SDP but also the time required to generate the problem in YALMIP (variable declaration and SDP construction).
These observations suggest that, for this class of SPLD test problems, fixing suitable values of $d_0$ and $r$ in the BSOS-SPLD hierarchy allows one to reach the optimal value at lower relaxation orders and with significantly reduced overall computation time compared to the standard BSOS hierarchy.
\end{enumerate}

\medskip
\noindent
{\bf Test problem} ({\verb"SPM"})
\begin{align*}
\min\limits_{\bx\in\R^2} \quad & x_1^N + x_2^N +m(x_1,x_2)   \\
{\rm s.t.} \quad \,& 0\leq x_1^2+x_2^2\leq1,\\
&0\leq x_j\leq1, \ j=1,2,
\end{align*}
where $m\colon\R^2\to\R$ is the Motzkin polynomial such that $m(x_1,x_2):=x_1^4x_2^2+x_1^2x_2^4-3x_1^2x_2^2,$ and $N$ is an even integer with $N\geq8.$
Note that the objective $x_1^N + x_2^N + m(x_1,x_2)$ is an SPLD polynomial but not an SPQ polynomial.

We now solve the $k$-th $(k \in\{ 1, 2,5, 6,7\})$ relaxation problems of the test problem ({\verb"SPM"}) for $N\in\{20,40,60,\allowbreak 100,\allowbreak 200,400\}$ using the {\bf BSOS} hierarchy \cite{Lasserre2017} and our {\bf BSOS-SPLD} hierarchy, respectively.

\begin{table}[t]
\caption{Comparison of BSOS and BSOS-SPLD for SPM}
\label{table2}
\centering{\footnotesize
\begin{tabular}{ccccccccccc}
\toprule
\multicolumn{1}{l}{\textbf{Problem}} & \multicolumn{5}{l}{\textbf{BSOS}}  & \multicolumn{4}{l}{\textbf{BSOS-SPLD}} \\
\cmidrule(rl){2-6} \cmidrule(rl){7-11}
$N$   & $(d,k)$ & $\verb"Opt"$ & $\verb"Time"$ & $\verb"ms"$ & \verb"rnk" & $(d_0,r,k)$  & $\verb"Opt"$ & $\verb"Time"$ & $\verb"max_ms"$ & \verb"max_rnk" \\
\toprule
$20$  & $10,1$  & $-0.5325$    & $0.1$         & $66$        & $5$        & $10,3,1$     & $-0.5325$    & $0.1$         & $11$            & $5$ \\
      & $10,2$  & $-0.4980$    & $0.1$         & $66$        & $4$        & $10,3,2$     & $-0.4980$    & $0.1$         & $11$            & $4$ \\
      & $10,5$  & $-0.4980$    & $0.6$         & $66$        & $4$        & $10,3,6$     & $-0.4980$    & $1.6$         & $11$            & $2$ \\
      & $10,6$  & $-0.4980$    & $1.7$         & $66$        & $1$        & $10,3,7$     & $-0.4980$    & $5.5$         & $11$            & $1$ \\
\midrule
$40$  & $20,1$  & $-0.5597$    & $1.6$         & $242$       & $5$        & $20,3,1$     & $-0.5597$    & $0.1$         & $21$            & $5$ \\
      & $20,2$  & $-0.5000$    & $1.5$         & $242$       & $4$        & $20,3,2$     & $-0.5000$    & $0.1$         & $21$            & $4$ \\
      & $20,5$  & $-0.5000$    & $2.5$         & $242$       & $4$        & $20,3,6$     & $-0.5000$    & $1.6$         & $21$            & $2$ \\
      & $20,6$  & $-0.5000$    & $3.7$         & $242$       & $1$        & $20,3,7$     & $-0.5000$    & $5.6$         & $21$            & $1$ \\
\midrule
$60$  & $30,1$  & $-0.5623$    & $8.9$         & $496$       & $5$        & $30,3,1$     & $-0.5623$    & $0.1$         & $31$            & $5$ \\
      & $30,2$  & $-0.5000$    & $8.1$         & $496$       & $4$        & $30,3,2$     & $-0.5000$    & $0.1$         & $31$            & $4$ \\
      & $30,5$  & $-0.5000$    & $11.9$        & $496$       & $4$        & $30,3,6$     & $-0.5000$    & $1.6$         & $31$            & $2$ \\
      & $30,6$  & $-0.5000$    & $13.9$        & $496$       & $1$        & $30,3,7$     & $-0.5000$    & $5.6$         & $31$            & $1$ \\
\midrule
$100$ & $50,1$  & $-0.5625$    & $347.9$       & $1326$      & $5$        & $50,3,1$     & $-0.5625$    & $0.1$         & $51$            & $5$ \\
      & $50,2$  & $-0.5000$    & $296.1$       & $1326$      & $4$        & $50,3,2$     & $-0.5000$    & $0.1$         & $51$            & $4$ \\
      & $50,5$  & $-0.5000$    & $472.2$       & $1326$      & $4$        & $50,3,6$     & $-0.5000$    & $1.6$         & $51$            & $2$ \\
      & $50,6$  & $-0.5000$    & $544.1$       & $1326$      & $1$        & $50,3,7$     & $-0.5000$    & $5.7$         & $51$            & $1$ \\
\midrule
$200$ & $100,1$ & $-$          & $-$           & $5151$      & $-$        & $100,3,7$    & $-0.5000$    & $6.1$         & $101$           & $1$ \\
\midrule
$400$ & $200,1$ & $-$          & $-$           & $20301$     & $-$        & $200,3,7$    & $-0.5000$    & $8.6$         & $201$           & $1$ \\
\bottomrule
\end{tabular}}
\end{table}

From the numerical results in Table~\ref{table2}, we observe that for $N=20,$ the BSOS hierarchy reaches the rank-one condition slightly faster ($1.7$ seconds at the $6$th relaxation order) than BSOS-SPLD ($5.7$ seconds at the $7$th relaxation order), though both remain within a low computational cost range.
However, as $N$ increases, the CPU times taken by the BSOS hierarchy also increase rapidly, while the CPU times taken by the BSOS-SPLD hierarchy increase very slowly.
This shows that the BSOS hierarchy is heavily affected by the degree of the involved polynomials; moreover, as the degree increases, the method may no longer be effective.
On the other hand, our BSOS-SPLD hierarchy is more stable, being less affected by the higher-degree terms of the involved polynomials.

In addition, we observe that the BSOS hierarchy reaches the optimal value at the $6$th hierarchy level by satisfying the rank-one condition, whereas the BSOS-SPLD hierarchy requires the $7$th level to satisfy the same condition and obtain the optimal value.
This indicates that although the BSOS-SPLD hierarchy may require a slightly higher hierarchy level to guarantee global optimality, it becomes significantly more efficient than BSOS as $N$ grows, particularly in terms of computational time.

\subsection{An application to portfolio optimization problems}\label{subsec:4}

We consider a synthetic portfolio optimization problem that includes covariance shrinkage and higher–order penalization terms, and serves as a benchmark for nonlinear polynomial optimization.
Given the number of assets $n$ and the sample length $T$, we generate random returns, estimate a covariance matrix, apply correlation shrinkage, and add higher–order penalty terms to obtain a nonconvex polynomial optimization problem.

For a given pair $(n,T),$ we generate synthetic excess returns $R\in \mathbb{R}^{T\times n}$ by drawing mutually independent Gaussian samples
\begin{equation*}
R_{tj} := 0.001\,\xi_{tj},\quad   \xi_{tj} \sim \mathcal N(0,1),\quad t=1,\ldots,T,\ j=1,\ldots,n.
\end{equation*}
The sample mean $\boldsymbol\mu\in\mathbb{R}^n$ and centered returns $X\in\mathbb{R}^{T\times n}$ are defined by
\begin{equation*}
\mu_j := \frac{1}{T} \sum_{t=1}^TR_{tj}, \quad X_{tj} := R_{tj} - \mu_{j}, \quad t=1,\ldots,T,\ j=1,\ldots,n,
\end{equation*}
and the sample covariance $S=(S_{ij})$ is given by
\begin{equation*}
S := \frac{1}{T-1} X^T X \in \mathbb{R}^{n\times n}.
\end{equation*}
In all numerical experiments, we fix the sample length at $T=300$ and vary the portfolio dimension $n$.

Next we apply a constant-correlation shrinkage scheme of Ledoit--Wolf type~\cite{Ledoit2004}.
Let $C$ denote the sample correlation matrix of $R$ and define:
\begin{equation*}
\bar r := \frac{\sum_{i\neq j} C_{ij}}{n(n-1)},\quad   s_i := \sqrt{S_{ii}},\ i=1,\ldots,n,
\end{equation*}
and let $s = (s_1,\ldots,s_n)^T \in \R^n$.
The target matrix  is the highly structured constant-correlation covariance matrix.
It is defined to preserve the sample variances ($s_i^2$) while assuming a single, uniform correlation ($\bar{r}$) for all asset pairs.
Let $T_0 = (t_{ij})$ be the target covariance matrix, where
\begin{equation*}
t_{ij} :=
\begin{cases}
s_i^2, & \text{if } i = j, \\
\bar r \cdot s_i s_j, & \text{if } i \neq j.
\end{cases}
\end{equation*}
Following Ledoit--Wolf~\cite{Ledoit2004}, we compute the optimal shrinkage intensity $\alpha^*$.
This requires estimating two quantities:
(i) the distance between the sample and the target
\begin{equation*}
\hat{d}^2 = \sum_{i=1}^n \sum_{j=1}^n (S_{ij} - t_{ij})^2,
\end{equation*}
(ii) the sum of asymptotic variances of the sample covariances
\begin{equation*}
\hat{b}^2 = \frac{1}{T^2} \sum_{t=1}^T \sum_{i=1}^n \sum_{j=1}^n\left( X_{t i} X_{t j} - S_{ij} \right)^2.
\end{equation*}
The optimal shrinkage intensity $\alpha^*$ that minimizes the mean squared error is then estimated by:
\begin{equation*}
\alpha^* = \max\left(0, \min\left(1, \hat{b}^2/ \hat{d}^2\right)\right).
\end{equation*}
The shrunk covariance matrix is
\begin{equation*}
\Sigma_0 := (1-\alpha^*) S + \alpha^* T_0.
\end{equation*}
To obtain a genuinely nonconvex test problem, we introduce an indefinite quadratic part by shifting and normalizing $\Sigma_0$.
Let $\lambda_{\min}$ and $\lambda_{\max}$ denote the smallest and largest eigenvalues of $\Sigma_0.$
Fix a parameter $\theta\in(0,1)$ and set
\begin{equation*}
  \tau := \lambda_{\min} + \theta\bigl(\lambda_{\max}-\lambda_{\min}\bigr),\qquad
  Q_0 := \Sigma_0 - \tau I_n,\qquad
  Q := \frac{Q_0}{\max_i |\lambda_i(Q_0)|}.
\end{equation*}
The scalar $\tau$ shifts the spectrum of $\Sigma_0$ so that $Q$ becomes indefinite, while the normalization ensures that the eigenvalues of $Q$ remain in $[-1,1]$.

We formulate the optimization problem by defining a trade-off between the mean return $\boldsymbol\mu^T \bx$ and the indefinite quadratic risk term $\bx^T Q \bx,$ together with higher-order penalty terms.
We consider a portfolio with weight vector $\bx\in\R^n$ constrained to lie in the simplex
\begin{equation*}
  \Delta := \{\,\bx\in\R^n : e^T \bx = 1,\ \bx\ge 0\,\}.
\end{equation*}
For fixed parameters $\lambda\ge 0$, $\eta\ge 0$ and an integer $p\ge 2$ (in the experiments we use $\eta=0.25$, $p=4$), we define the SPLD objective
\begin{equation}\label{eq:spld-portfolio}
  f(\bx) = \lambda\sum_{i=1}^n x_i^{2p} + \bx^T Q \bx - \boldsymbol\mu^T \bx + \eta\sum_{i=1}^n\bigl(x_i^2 - 1/n\bigr)^2,
\end{equation}
and the associated portfolio problem
\begin{equation}\label{eq:spld-portfolio-problem}
  \min_{\bx\in\Delta}\; f(\bx).
\end{equation}
The first and last terms in \eqref{eq:spld-portfolio} are even–degree, separable penalties.
The term $\lambda \sum_{i} x_i^{2p}$ discourages overly concentrated positions,
while the penalty $\eta \sum_{i=1}^n (x_i^2 - 1/n)^2$ further regularizes the weights by pulling the squared positions toward a common scale, thereby favouring diversified portfolios within the simplex.
The middle term $\bx^T Q \bx - \boldsymbol\mu^T \bx$ plays the role of a (shifted) mean–variance trade–off with an indefinite risk matrix $Q$.
Overall, $f$ has degree $2p$ (equal to $8$ in our experiments with $p=4$) and admits an SPLD decomposition $f(\bx)=s(\bx)+p(\bx)$ with $s$ separable and $p$ of degree at most $2$.

From a portfolio perspective, solving the portfolio problem~\eqref{eq:spld-portfolio-problem} means choosing a fully invested portfolio $\bx\in\Delta$ that balances expected excess return, indefinite quadratic risk, and high–degree diversification penalties.
In our numerical experiments, this synthetic SPLD portfolio problem serves as a structured nonconvex benchmark for comparing the BSOS and BSOS–SPLD hierarchies.

\begin{table}[t]
\caption{Comparison of BSOS and BSOS-SPLD on the synthetic SPLD
portfolio problem for different dimensions $n$ and relaxation orders $k$ ($\lambda=0.02,$ $\eta=0.25$, $p=4$, $\theta=0.2$ in all tests).}
\label{table3}
\centering{\footnotesize
\begin{tabular}{ccccccccccc}
\toprule
\multicolumn{1}{l}{\textbf{Problem}} & \multicolumn{5}{l}{\textbf{BSOS}}  & \multicolumn{4}{l}{\textbf{BSOS-SPLD}} \\
\cmidrule(rl){2-6} \cmidrule(rl){7-11}
$n$  & $(d,k)$ & $\verb"Opt"$        & $\verb"Time"$ & $\verb"ms"$ & \verb"rnk" & $(d_0,r,k)$ & $\verb"Opt"$      & $\verb"Time"$ & $\verb"max_ms"$ & \verb"max_rnk" \\
\toprule
$6$  & $4,1$   & $-1.4159{\rm e}-01$ & $2.8$         & $210$       & $2$        & $27,2,1$ & $-1.4159{\rm e}-01$  & $0.1$         & $28$            & $2$             \\
     & $4,2$   & $-1.1321{\rm e}-01$ & $4.5$         & $210$       & $1$        & $27,2,2$ & $-1.1321{\rm e}-01$  & $0.2$         & $28$            & $1$             \\
\midrule
$8$  & $4,1$   & $-1.1603{\rm e}-01$ & $55.5$        & $495$       & $2$        & $44,2,1$ & $-1.1603{\rm e}-01$  & $0.2$         & $45$            & $2$             \\
     & $4,2$   & $-9.9594{\rm e}-02$ & $81.9$        & $495$       & $1$        & $44,2,2$ & $-9.9593{\rm e}-02$  & $0.4$         & $45$            & $1$             \\
\midrule
$10$ & $4,1$   & $-$                 & $-$           & $1001$      & $-$        & $65,2,1$ & $-8.5904{\rm e}-02$  & $0.5$         & $66$            & $1$             \\
\midrule
$15$ & $4,1$   & $-$                 & $-$           & $3876$      & $-$        & $135,2,1$ & $-7.8386{\rm e}-02$ & $6.4$         & $136$           & $1$             \\
\midrule
$20$ & $4,1$   & $-$                 & $-$           & $10626$     & $-$        & $230,2,1$ & $-1.4747{\rm e}-01$ & $56.5$        & $231$           & $2$             \\
     & $4,2$   & $-$                 & $-$           & $10626$      & $-$        & $230,2,2$ & $-7.3522{\rm e}-02$ & $76.5$        & $231$           & $1$             \\
\bottomrule
\end{tabular}}
\end{table}

Table~\ref{table3} reports the performance of BSOS and BSOS-SPLD on the portfolio problem \eqref{eq:spld-portfolio} for several asset dimensions $n$ and relaxation orders $k$.
For $n=6,8,$ the two hierarchies yield essentially the same optimal values at each relaxation order, but BSOS-SPLD is much cheaper:
it uses considerably smaller conic blocks (e.g., \texttt{ms} $=28$ vs.\ $210$ for $n=6$) and is between one and two orders of magnitude faster in CPU time.
For larger instances ($n\ge 10$) the standard BSOS relaxations become numerically intractable:
the moment matrices reach size $1001$ and the solver fails within the time/memory limits.
In contrast, BSOS-SPLD still solves all cases, with moderate matrix sizes (at most $231$) and reasonable computing times, while its bounds improve as $k$ increases.
Overall, the table illustrates that exploiting the SPLD structure improves scalability while preserving the tightness of the resulting bounds.

\begin{table}[t]
\centering
\caption{Characteristics of the optimal SPLD portfolio for $n=20$ under different values of $\lambda$ ($\eta=0.25$, $p=4$, $\theta=0.2$; solutions computed by BSOS-SPLD).}
\label{table4}
\centering{\footnotesize
\begin{tabular}{cccccccc}
\toprule
$\lambda$ & $\boldsymbol\mu^T \bx$         & $\bx^T \Sigma_0 \bx$ & $\bx^T Q \bx$ & $N_{\mathrm{eff}}$ & $\max_i x_i$ & $\#\{i:x_i\ge0.05\}$ & $\#\{i:x_i\ge0.01\}$  \\
\midrule
$0.02$    & -5.5974{\rm e}-05 & 4.2606{\rm e}-07 & $-0.1210$ & $2.0171$           & $0.6961$ & $2$  & $7$   \\
$0.2$     & -5.2215{\rm e}-05 & 3.5955{\rm e}-07 & $-0.1021$ & $2.3902$           & $0.6340$ & $3$  & $10$   \\
$2.0$     & -4.3953{\rm e}-05 & 2.4401{\rm e}-07 & $-0.0677$ & $3.5271$           & $0.5037$ & $4$  & $17$   \\
\bottomrule
\end{tabular}}
\end{table}

Table~\ref{table4} reports, for different values of the diversification parameter $\lambda$, the following portfolio statistics:
$\boldsymbol\mu^T \bx$ (expected excess return), $\bx^T\Sigma_0 \bx$ (variance under the shrunk covariance matrix $\Sigma_0$), $\bx^T Q \bx$ (value of the indefinite quadratic risk term), $N_{\mathrm{eff}} = 1/\sum_{i=1}^n x_i^2$ (effective
number of assets), $\max_i x_i$ (largest portfolio weight), and $\#\{i : x_i \ge 0.05\}$, $\#\{i : x_i \ge 0.01\}$ (numbers of assets whose weights are at least $5\%$ and $1\%$, respectively).

Table~\ref{table4} illustrates that the diversification parameter $\lambda$ effectively controls the concentration of the optimal portfolio.
As $\lambda$ increases, the high-degree separable penalty $\lambda\sum_i x_i^{2p}$ grows stronger, and the optimal portfolio shifts towards more diversified allocations:
$N_{\mathrm{eff}}$ increases, $\max_i x_i$ decreases, and the numbers of assets with weights at least $5\%$ and $1\%$ grow.
This confirms that the penalty successfully shifts the portfolio from a highly concentrated configuration towards a more balanced allocation.
In this setting, $\lambda$ thus serves as a diversification parameter, moderating the influence of the nonconvex quadratic term.

\section{A class of convex SPLD polynomial optimization problems}\label{sect:5}

\subsection{Convex polynomials and SPLD polynomials}

In this subsection, we study convex polynomials with SPLD structure and derive a key result on the degree relationship for convex SPLD polynomials.

\begin{lemma}{\rm \cite{Bank1988,Belousov2002}}\label{lemma1}
Let $f\colon \mathbb{R}^n \to \mathbb{R}$ be a convex polynomial of degree $d,$ and let $f_d$ denote the homogeneous polynomial of degree $d$ in $f.$
Then $f_d$ is convex.
\end{lemma}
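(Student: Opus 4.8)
The plan is to recover $f_d$ as a pointwise limit of rescaled copies of $f$ and then invoke the elementary fact that a pointwise limit of convex functions is convex. Write $f=\sum_{k=0}^{d}f_k$, where each $f_k\in\R[\bx]$ is the homogeneous component of $f$ of degree $k$ (so $f_d\not\equiv 0$, since $\deg f=d$). For a fixed $t>0$, the map $\bx\mapsto f(t\bx)$ is convex on $\R^n$, being the composition of the convex polynomial $f$ with the linear map $\bx\mapsto t\bx$; dividing by the positive constant $t^{d}$ preserves convexity, so $g_t(\bx):=f(t\bx)/t^{d}$ is convex on $\R^n$ for every $t>0$.

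Next I would identify the limit. By homogeneity, $f(t\bx)=\sum_{k=0}^{d}t^{k}f_k(\bx)$, hence $g_t(\bx)=\sum_{k=0}^{d}t^{k-d}f_k(\bx)$. As $t\to+\infty$, every term with $k<d$ tends to $0$, so $g_t(\bx)\to f_d(\bx)$ pointwise on $\R^n$. Finally, fix $\bx,\by\in\R^n$ and $\lambda\in[0,1]$. Convexity of $g_t$ gives $g_t(\lambda\bx+(1-\lambda)\by)\le \lambda\,g_t(\bx)+(1-\lambda)\,g_t(\by)$ for each $t>0$; passing to the limit $t\to+\infty$ on both sides yields $f_d(\lambda\bx+(1-\lambda)\by)\le \lambda\,f_d(\bx)+(1-\lambda)\,f_d(\by)$, which is exactly the convexity of $f_d$.

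I do not expect a genuine obstacle here; the only points requiring a little care are that the substitution argument uses $t>0$ (so that $t\to+\infty$ stays inside the positive reals) and that convexity is indeed preserved under pointwise limits, which is immediate from the definition as above. An alternative, essentially equivalent, route is via Hessians: $\nabla^2 f_d$ is the top-degree homogeneous part of the positive semidefinite matrix polynomial $\nabla^2 f$, and the scaling relation $(\nabla^2 f)(t\bx)/t^{\,d-2}\to \nabla^2 f_d(\bx)$ together with closedness of the PSD cone shows $\nabla^2 f_d(\bx)\succeq 0$ for all $\bx$, hence $f_d$ is convex. I would present the function-level argument as the main proof since it avoids differentiating and handles the degenerate cases (e.g.\ small $d$) uniformly.
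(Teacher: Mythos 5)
Your proof is correct. The paper itself does not prove this lemma; it simply cites \cite{Bank1988,Belousov2002}, so there is no in-paper argument to compare against. Your dilation argument is the standard and complete one: for each $t>0$ the function $g_t(\bx):=f(t\bx)/t^{d}$ is convex (precomposition with a linear map followed by division by a positive constant), the expansion $g_t(\bx)=\sum_{k=0}^{d}t^{k-d}f_k(\bx)$ shows $g_t\to f_d$ pointwise as $t\to+\infty$, and the convexity inequality passes to the pointwise limit. Every step is justified, including the two points you flag (positivity of $t$ and preservation of convexity under pointwise limits), and the Hessian variant you sketch is an equally valid alternative. Nothing is missing.
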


\begin{lemma}\label{lemma2}
Let $p_d\colon \R^n\to\R$ be a homogeneous polynomial of degree $d\ge2$ consisting only of non-separable monomials. Then $p_d$ is not convex.
\end{lemma}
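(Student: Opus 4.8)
The plan is to show that a homogeneous polynomial of degree $d \ge 2$ built only from non-separable monomials (those involving at least two distinct variables) cannot be convex, by exhibiting a point where the Hessian fails to be positive semidefinite. The natural candidate is the origin, because every monomial of degree $\ge 3$ contributes zero to the Hessian there, so only the genuinely quadratic monomials survive. This immediately handles the case $d \ge 3$: since $p_d$ is homogeneous of degree $d \ge 3$ with no separable (univariate) monomials, every term has degree $d \ge 3$, hence $\nabla^2 p_d(\bze) = 0$, and a homogeneous polynomial of degree $d \ge 3$ whose Hessian vanishes at the origin can only be convex if $p_d \equiv 0$ (indeed, convexity of $p_d$ together with $p_d(\bze)=0$ and $\nabla p_d(\bze)=0$ would force $p_d \ge 0$ everywhere, while homogeneity of odd degree rules this out unless $p_d \equiv 0$, and for even degree one can still derive a contradiction, as explained below). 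But $p_d$ is not the zero polynomial, contradiction.

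The remaining and genuinely substantive case is $d = 2$: here $p_2(\bx) = \bx^T A \bx$ for a symmetric matrix $A$ with zero diagonal (no $x_j^2$ terms) and at least one nonzero off-diagonal entry, say $A_{kl} \ne 0$ for some $k \ne l$. Then the plan is simply to test the quadratic form on the vector $\bx = \be_k - \operatorname{sign}(A_{kl})\be_l$, which gives $\bx^T A \bx = A_{kk} + A_{ll} - 2|A_{kl}| = -2|A_{kl}| < 0$, so $p_2$ is not positive semidefinite and hence not convex. This is the clean core of the argument; for general even $d$ one reduces to it by restricting $p_d$ to a suitable two-dimensional subspace or by differentiating: pick a non-separable monomial $\bx^\ba$ appearing in $p_d$ with nonzero coefficient and two indices $k \ne l$ with $\alpha_k, \alpha_l \ge 1$, and consider an appropriate partial derivative of order $d-2$ that isolates a cross term; convexity of $p_d$ would be inherited by this derivative (second derivatives of a convex function along coordinate directions need not stay convex in general, so one must be slightly careful here).

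The cleanest uniform route, and the one I would actually write, is: suppose for contradiction $p_d$ is convex. Consider the restriction $\varphi(t) := p_d(t\, \bv)$ for a fixed direction $\bv$; convexity is preserved under affine restriction, so $\varphi$ is convex on $\R$, but $\varphi(t) = t^d\, p_d(\bv)$. If $d$ is odd this is convex on all of $\R$ only if $p_d(\bv) = 0$ for every $\bv$, forcing $p_d \equiv 0$. If $d$ is even, convexity of $t \mapsto t^d c$ holds for $c \ge 0$, so this alone gives only $p_d \ge 0$; then I invoke Lemma~\ref{lemma1}-style reasoning or, more directly, the substitution trick: restrict $p_d$ to the plane spanned by $\be_k, \be_l$ where $k \ne l$ index a cross term, obtaining a convex bivariate homogeneous polynomial $q(x_k, x_l)$ of degree $d$ that still contains a non-separable monomial; since a nonnegative homogeneous bivariate polynomial that vanishes on the axes would have to be divisible by $(x_k x_l)$ an inappropriate number of times, one derives a contradiction with convexity by examining the leading behaviour, or simply by noting that a convex nonnegative homogeneous polynomial of even degree $d$ in two variables with $q(1,0) = q(0,1) = 0$ must be identically zero (convexity plus $q \ge 0$ plus these boundary zeros force, via Lemma~\ref{lemma1} applied to slices, that $q \equiv 0$), contradicting the presence of a cross monomial.

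I expect the main obstacle to be handling the even-degree case cleanly: the origin-Hessian argument is decisive only for $d \ge 3$, and for even $d$ the homogeneous slice $t \mapsto t^d c$ is convex whenever $c \ge 0$, so one cannot rule out convexity by one-dimensional slicing alone. Reducing to the two-variable setting and then arguing that a convex, nonnegative, even-degree homogeneous bivariate polynomial vanishing on both coordinate axes must be identically zero is the technical heart; the remaining arithmetic (verifying the sign of the quadratic form, or expanding a partial derivative) is routine.
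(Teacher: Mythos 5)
Your $d=2$ argument and your odd-degree argument are both correct, but the even-degree case $d\ge 4$ — which you yourself identify as the technical heart — has a genuine gap. You restrict $p_d$ to the coordinate plane spanned by $\be_k,\be_l$, where $k\neq l$ index a cross term, and claim the restriction $q(x_k,x_l)$ ``still contains a non-separable monomial,'' so that the conclusion $q\equiv 0$ yields a contradiction. This is false in general: a non-separable monomial $\bx^{\ba}$ with $\alpha_k,\alpha_l\ge 1$ may also involve a third variable $x_m$, and then it vanishes identically on the plane $\{x_m=0,\ m\notin\{k,l\}\}$. For instance, for $p(x_1,x_2,x_3)=x_1^2x_2x_3$ (homogeneous of degree $4$, every monomial non-separable), the restriction to \emph{every} coordinate $2$-plane is identically zero, so the fact that a convex, nonnegative, bivariate homogeneous $q$ vanishing at $(1,0)$ and $(0,1)$ must be zero contradicts nothing. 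Your fallback phrases (``divisible by $x_kx_l$ an inappropriate number of times,'' ``examining the leading behaviour,'' ``Lemma~\ref{lemma1} applied to slices'') do not supply the missing step.

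The repair is to run your zero-set argument in $\R^n$ rather than in a $2$-plane, which is exactly what the paper does and which removes the case split entirely. Assuming $p_d$ convex, homogeneity of degree $d\ge 2$ gives $p_d(\bze)=0$ and $\nabla p_d(\bze)=\bze$, hence $p_d\ge 0$ everywhere by the gradient inequality. The set $L=\{\bx: p_d(\bx)=0\}=\{\bx: p_d(\bx)\le 0\}$ is then convex as a sublevel set, closed under scalar multiplication by homogeneity, and therefore closed under addition (since $\bx+\by=2\cdot\tfrac{\bx+\by}{2}$), i.e., $L$ is a linear subspace. Every monomial of $p_d$ involves at least two distinct variables, so every monomial vanishes at each standard basis vector $\be_j$; hence $\be_j\in L$ for all $j$, forcing $L=\R^n$ and $p_d\equiv 0$, contradicting $d\ge 2$. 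This single argument covers $d=2$, odd $d$, and even $d\ge 4$ uniformly, and in particular handles monomials supported on three or more variables, which is precisely where your two-variable reduction breaks down.
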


\begin{proof}
Assume to the contrary that $p_d$ is convex.
Since $p_d$ is homogeneous of degree $d\geq2$, we have $p_d(\bze)=0$ and $\nabla p_d(\bze)=\bze$.
Then, by convexity, for every $\bx\in\R^n$,
\begin{equation*}
p_d(\bx)\geq p_d(\bze)+\langle \nabla p_d(\bze), \bx-\bze\rangle=0,
\end{equation*}
and so, $p_d(\bx)\geq0$ for all $\bx\in\R^n$.
In particular, the set
\begin{equation*}
L:=\{\bx\in\R^n \mid p_d(\bx)=0\}=\{\bx\in\R^n \mid p_d(\bx)\leq0\}
\end{equation*}
is convex.

Observe that, by homogeneity, if $\bx\in L$ and $\lambda\in\R$, then $p_d(\lambda \bx)=\lambda^d p_d(\bx)=0,$ i.e., $\lambda \bx\in L$.
Moreover, if $\bx,\by\in L,$ then, by convexity of $L,$ we have $(\bx+\by)/2\in L.$
Hence, by homogeneity, we have
\begin{equation*}
\bx+\by=2\cdot\frac{\bx+\by}{2}\in L.
\end{equation*}
Thus, $L$ is a linear subspace of $\R^n.$

Next, we show that $\be_j\in L$ for all $j=1,\ldots,n,$ where each $\be_j$ is the $j$-th standard basis vector in $\R^n.$
Fix $j\in\{1,\ldots,n\}$.
Since every monomial of $p_d$ involves at least two distinct variables, each monomial vanishes at $\be_j$, and so, we have $p_d(\be_j)=0$.
Thus, $\be_j\in L$ for all $j=1,\ldots,n$.
This implies that $L=\R^n$ since $L$ is a linear subspace of $\R^n.$
Hence, $p_d(\bx)=0$ for all $\bx\in\R^n$, which contradicts the assumption that $\deg p_d = d \ge 2$.
Thus, $p_d$ is not convex.
\end{proof}

\begin{theorem}\label{thm3}
Let $f\in\R[\bx]_d$ be a convex polynomial such that $f(\bx) := s(\bx)+p(\bx)$ for all $\bx\in\R^n,$ where $s$ is a separable polynomial on $\R^n$ and $p$ is a polynomial on $\R^n$ consisting only of non-separable monomials (i.e.$,$ monomials involving at least two variables).
Then$,$
\begin{equation*}
 \deg(s) \geq \deg(p).
\end{equation*}
\end{theorem}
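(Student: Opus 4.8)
The plan is to argue by contradiction using the two lemmas stated just above. Suppose, contrary to the claim, that $\deg(p) > \deg(s)$. Write $d := \deg(f)$. Since $f = s + p$ and the separable part $s$ has strictly smaller degree than $p$, the degree of $f$ is exactly $\deg(p) =: d$, and moreover the top-degree homogeneous component $f_d$ of $f$ coincides with the top-degree homogeneous component of $p$. In particular $f_d$ is a nonzero homogeneous polynomial of degree $d \ge 2$ all of whose monomials are non-separable (i.e., involve at least two distinct variables), because $p$ has this property and passing to the top-degree part only discards some of its monomials, never introducing new ones.

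Next I would apply Lemma~\ref{lemma1}: since $f$ is a convex polynomial of degree $d$, its leading homogeneous component $f_d$ is convex. But $f_d$ is a homogeneous polynomial of degree $d \ge 2$ consisting only of non-separable monomials, so Lemma~\ref{lemma2} applies and tells us that $f_d$ is \emph{not} convex. This is the desired contradiction, so we must have $\deg(s) \ge \deg(p)$.

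The one point requiring a little care—and what I expect to be the main (though minor) obstacle—is the claim that $f_d$ really does consist only of non-separable monomials and is nonzero. Nonzeroness is immediate because $d = \deg(p)$ is realized by $p$, hence by $f$. For the structural claim one must observe that the decomposition $f = s + p$ into a separable part and a purely-non-separable part is unique in the obvious sense (each monomial of $f$ is either a power of a single variable, contributing to $s$, or involves at least two variables, contributing to $p$), so no cancellation between $s$ and $p$ can occur in degree $d$; since $\deg(s) < d$, the degree-$d$ part of $f$ comes entirely from $p$ and therefore inherits the non-separable property. One should also note the edge case $d \le 1$: if $\deg(p) \le 1$ then $p$ has no non-separable monomials at all, so $p = 0$ and the inequality $\deg(s) \ge \deg(p)$ holds trivially (interpreting $\deg(0) = -\infty$); thus we may assume $d \ge 2$ when invoking Lemma~\ref{lemma2}, which is exactly its hypothesis. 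With these observations in place the proof is complete.
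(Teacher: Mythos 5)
Your proposal is correct and follows essentially the same route as the paper's proof: argue by contradiction, use Lemma~\ref{lemma1} to conclude that the leading form $f_d$ is convex, observe that $f_d$ coincides with the top-degree part of $p$ and hence consists only of non-separable monomials, and then invoke Lemma~\ref{lemma2} to obtain the contradiction. The extra care you take with the no-cancellation observation and the degenerate case $\deg(p)\le 1$ is sound but not needed beyond what the paper records.
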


\begin{proof}
Let $f_d$ be the homogeneous term of degree $d$ in $f.$
Since $f$ is convex, it follows from Lemma~\ref{lemma1} that $f_d$ is also convex.

Assume to the contrary that $\deg(s)<\deg(p).$
Then $f_d=p_d,$ where $p_d$ is the homogeneous term of degree $d$ in $p$.
By assumption, $p_d$ consists only of non-separable monomials.
Hence, by Lemma~\ref{lemma2}, $p_d$ is not convex.
This contradicts the fact that $f_d$ is convex.
This completes the proof.
\end{proof}

\begin{remark}\label{rmk1}
{\rm Theorem~\ref{thm3} shows that convex polynomials naturally allow a decomposition $f=s+p,$ where $s$ is separable and $p$ contains only non-separable monomials with $\deg(s)\geq\deg(p)$.
Although this does not always satisfy the strict inequality required by the SPLD structure, it is still reasonable to expect that convex polynomials can often be expressed in SPLD form.
In particular, if $f$ is a convex polynomial of degree $d$ and all of its non-separable terms (i.e., monomials involving at least two variables) have degree strictly less than $d$, then $f$  must be an SPLD polynomial.
}\end{remark}

It is well known that a nonnegative SOS-convex polynomial is SOS (\cite{Helton2010}), whereas there exist nonnegative convex polynomials that are not sum-of-squares (see \cite{Saunderson2023} for the first constructive example).
In \cite{Ahmadi2022}, it is shown that a nonnegative convex SPQ polynomial can be rewritten as the sum of a nonnegative separable polynomial and a nonnegative quadratic polynomial.
Along these lines, we propose the following result, which shows that a nonnegative polynomial that is a separable convex polynomial plus an SOS-convex polynomial can be written as the sum of a nonnegative separable polynomial and a sum-of-squares polynomial.

\begin{lemma}\label{lemma4}
Let $f\colon \R^n\to\R$ be an SOS-convex polynomial.
Assume that $f$ can be written as $f:=s+p,$ where $s$ is a separable convex polynomial on $\R^n,$
i.e.$,$ $s:=\sum_{j=1}^nu_j$ for some convex univariate polynomials $u_j$ with degree at most $2d_j$ in $x_j,$ $j=1,\ldots,n,$ and $p$ is an SOS-convex polynomial on $\R^n$ with degree at most $2r.$
If $f(\bx)\geq0$ for all $\bx\in \R^n,$ then there exist $\sigma\in\Sigma[\bx]_{r}$ and $\sigma_j\in\Sigma[x_j]_{d_j},$ $j=1,\ldots,n,$ such that
\begin{align*}
f = \sigma+\sum_{j=1}^n \sigma_j.
\end{align*}
\end{lemma}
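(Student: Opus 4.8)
The plan is to reduce the claim to the Helton--Nie theorem by peeling off from $f,$ as explicit univariate squares, precisely the pieces responsible for the high degrees; the key tool is a global minimizer of $f.$ First I would note that an SOS-convex polynomial is convex and, since $f\ge0,$ it is bounded below on $\R^n;$ a convex polynomial that is bounded below attains its infimum (see, e.g., \cite{Belousov2002}), so there exists $\bar{\bx}=(\bar{x}_1,\dots,\bar{x}_n)\in\R^n$ with $f(\bar{\bx})=\inf_{\R^n}f\ge0$ and $\nabla f(\bar{\bx})=0.$ Writing $f=\sum_{j=1}^n u_j(x_j)+p(\bx),$ the stationarity condition reads $u_j'(\bar{x}_j)+\partial_j p(\bar{\bx})=0$ for every $j,$ where $\partial_j$ denotes the partial derivative in $x_j.$

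Next, for each $j$ I would set
\[
\sigma_j(x_j):=u_j(x_j)-u_j(\bar{x}_j)-u_j'(\bar{x}_j)(x_j-\bar{x}_j).
\]
By convexity of $u_j,$ each $\sigma_j$ is a nonnegative univariate polynomial, hence a sum of squares; since $\deg\sigma_j=\deg u_j\le 2d_j$ (subtracting an affine term leaves the degree unchanged when $\deg u_j\ge2,$ and $\sigma_j\equiv0$ when $u_j$ is affine), we have $\sigma_j\in\Sigma[x_j]_{d_j}.$ Put $\sigma:=f-\sum_{j=1}^n\sigma_j.$ Expanding the definitions gives $\sigma=p+\ell,$ where $\ell(\bx):=\sum_{j=1}^n\bigl(u_j(\bar{x}_j)+u_j'(\bar{x}_j)(x_j-\bar{x}_j)\bigr)$ is affine; hence $\sigma$ has the same Hessian as $p$ and is therefore SOS-convex, and $\deg\sigma\le 2r$ (clear if $2r\ge1$ since $\deg\ell\le1,$ while if $r=0$ then $p$ is constant, so $\partial_j p\equiv0$ forces $u_j'(\bar{x}_j)=0$ and $\ell$ is constant as well).

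It then remains to show $\sigma\ge0$ on $\R^n.$ Since $\sigma_j$ depends only on $x_j$ and $\sigma_j'(\bar{x}_j)=u_j'(\bar{x}_j)-u_j'(\bar{x}_j)=0,$ we get $\nabla\bigl(\sum_{j}\sigma_j\bigr)(\bar{\bx})=0,$ hence $\nabla\sigma(\bar{\bx})=\nabla f(\bar{\bx})=0.$ As $\sigma$ is convex, a vanishing gradient at $\bar{\bx}$ makes $\bar{\bx}$ a global minimizer of $\sigma,$ and $\sigma(\bar{\bx})=f(\bar{\bx})-\sum_{j}\sigma_j(\bar{x}_j)=f(\bar{\bx})\ge0,$ so $\sigma\ge0$ everywhere. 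Thus $\sigma$ is a nonnegative SOS-convex polynomial of degree at most $2r,$ so by the Helton--Nie theorem it is a sum of squares, and being of degree at most $2r$ it lies in $\Sigma[\bx]_r.$ Setting this $\sigma$ and the $\sigma_j$ above then gives $f=\sigma+\sum_{j=1}^n\sigma_j$ with $\sigma\in\Sigma[\bx]_r$ and $\sigma_j\in\Sigma[x_j]_{d_j},$ as required. The step I expect to be the main obstacle is the very first one — guaranteeing that the infimum of $f$ is attained, so that a stationary point $\bar{\bx}$ is available; once $\bar{\bx}$ is in hand, the rest reduces to routine manipulations with tangent lines, Hessians, and the Helton--Nie theorem.
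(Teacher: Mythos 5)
Your proposal is correct and follows essentially the same route as the paper: both arguments locate a global minimizer $\bar{\bx}$ of the nonnegative convex polynomial $f$ via the Belousov--Klatte attainment theorem and subtract the univariate tangent lines $u_j(x_j)-u_j(\bar{x}_j)-u_j'(\bar{x}_j)(x_j-\bar{x}_j)$, which are nonnegative univariate hence SOS of degree at most $2d_j$. The only cosmetic difference is that the paper certifies the multivariate remainder (which equals your $\sigma=p+\ell$, namely $p(\bx)-p(\bar{\bx})-\nabla p(\bar{\bx})^T(\bx-\bar{\bx})$ after the normalization $f(\bar{\bx})=0$) to be SOS directly from the first-order characterization of SOS-convexity, whereas you invoke the Helton--Nie theorem on the nonnegative SOS-convex polynomial $\sigma$; both are valid.
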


\begin{proof}
Let $f$ be a nonnegative convex polynomial, it then follows from \cite[Theorem~3]{Belousov2002} that there exists $\overline \bx\in\R^n$ such that $f(\overline \bx)=\min_{\bx\in\R^n}f(\bx),$ and so, $\nabla f(\overline \bx)=0.$
Without loss of generality, we may assume that $f(\overline \bx) = 0.$
So, by assumption that $f = s+p,$ we have
\begin{align*}
f(\bx)=p(\bx)-p(\overline \bx)-\nabla p(\overline \bx)^T(\bx-\overline \bx)+\sum_{j=1}^n\left\{u_j(x_j)-u_j(\overline x_j)-u_j'(\overline x_j)(x_j-\overline x_j)\right\}
\end{align*}
for all $\bx\in\R^n.$
Since $p$ is an SOS-convex polynomial, $p(\bx)-p(\overline \bx)-\nabla p(\overline \bx)^T(\bx-\overline \bx)$ is SOS with degree at most $2r.$
Moreover, since each $u_j$ is convex, $u_j(x_j)-u_j(\overline x_j)-u_j'(\overline x_j)(x_j-\overline x_j)$ is nonnegative, and so, it is SOS with degree at most $2d_j$ \cite{Hilbert1888}.
Thus, the desired result follows.
\end{proof}

For an SOS-convex polynomial $f$ of degree $2d$, its Hessian $H(\bx)=\nabla^2 f(\bx)$ is, by definition, an SOS-matrix (of degree at most $2d-2$).
Hence there exists a Gram matrix $Q \succeq 0$ such that
\begin{equation*}
H(\bx) = \left(I_n \otimes \lceil\bx\rceil_{d-1}\right)^T Q \left(I_n \otimes \lceil\bx\rceil_{d-1}\right),
\end{equation*}
where $I_n$ is the $n\times n$ identity and $\otimes$ is the Kronecker product (see, e.g., \cite{Ahmadi2012}).

In order to exploit the additional separable structure required in Lemma~\ref{lemma4}, we look for a more structured Gram representation of $H$.
The structured assumption in Lemma~\ref{lemma4} (existence of a decomposition $f=s+p$ with a separable convex polynomial $s$ and an SOS-convex polynomial $p$) can be verified by checking whether there exist matrices $Q_j\succeq 0,$ $j=0,1,\ldots,n,$ such that
\begin{equation}\label{Hessian}
H(\bx) = (I_n\otimes \lceil\bx\rceil_{r-1})^T Q_0 (I_n\otimes \lceil\bx\rceil_{r-1})+\sum_{j=1}^n (\lceil x_j\rceil_{d_j-1} \be_j^T )^T Q_j (\lceil x_j\rceil_{d_j-1}\be_j^T),
\end{equation}
where each $\be_j$ is the $j$-th standard basis vector in $\R^n.$
Equating coefficients of the monomials in $\bx$ in the matrix identity \eqref{Hessian} yields linear equations in the entries of $Q_j$, $j=0,1,\ldots,n$, together with the semidefinite constraints $Q_j\succeq0$, $j=0,1,\ldots,n$.
Hence, verifying the assumption of Lemma~\ref{lemma4} reduces to an SDP feasibility problem in these Gram matrices.
Moreover, this structured Gram representation can effectively exploit the term sparsity of SPLD polynomials to further reduce the size of the semidefinite matrices.\footnote{Term sparsity refers to the property of a polynomial having few nonzero monomial terms relative to the total number of possible monomials up to a given degree.}

More specifically, let $f\in\Sigma[\bx]_d,$ i.e., $\sum_{\ell=1}^{q}f_{\ell}^2$ for some $f_\ell\in\R[\bx]_d.$
Then by \cite[Theorem~1]{Reznick1978}, we have $\mathcal{N}(f_\ell)\subseteq \frac{1}{2}\mathcal{N}(f)$ for all $\ell,$ where $\mathcal{N}(f)$ denotes Newton polytope defined as $\mathcal{N}(f):= {\rm conv}\{\ba\in\N^n_{2d} : f_\ba\neq0\}.$
If $f$ is not only an SOS polynomial, but also an SPLD polynomial, then each $f_\ell$ may involve significantly fewer monomials than the full monomial basis $(\bx^\ba)_{\ba \in \N_d^n}$ in $\R[\bx]_{2d}.$
As a result, it is possible to reduce the size $s(n,d)$ of a semidefinite matrix corresponding to an SOS decomposition of $f$.
Furthermore, if $f$ has a decomposition as in Lemma~\ref{lemma4}, then the maximum size of a semidefinite matrix can be further reduced (compared to \cite[Theorem~1]{Reznick1978}); see the forthcoming Example~\ref{ex1} for more detailed explanation.

\subsection{Exact SOS relaxations for SOS-convex SPLD polynomial optimization}

In this subsection, we focus on a more tractable subclass of convex SPLD polynomial optimization problems. Namely, we consider the following convex optimization problem:
\begin{align}\label{CP}
\inf\limits_{\bx\in\R^n} \ \left\{f(\bx) \colon \  g_i(\bx)\leq 0, \ i=1,\ldots,m\right\}, \tag{${\rm CP}$}
\end{align}
where $f,g_i\colon\R^n\to\R,$ $i=1,\ldots,m,$ are convex polynomials such that
\begin{align*}
f(\bx):=s_0(\bx)+p_0(\bx) \ \textrm{ and } \ g_i(\bx):=s_i(\bx)+p_i(\bx), \ i=1,\ldots,m,
\end{align*}
for all $\bx\in\R^n,$ and for each $i = 0, 1, \ldots, m$, $s_i\colon\R^n\to\R$ is a separable convex polynomial, i.e.,
\begin{align*}
s_i(\bx):=\sum_{j=1}^nu_i^j(x_j),
\end{align*}
namely, each $u_i^j(x_j)$ is a convex polynomial in $x_j$ with degree at most $2d_j$ and each $p_i\colon\R^n\to\R$ is an SOS-convex polynomial (it may include separable polynomial terms) with degree at most $2r.$
We denote the feasible set of the problem \eqref{CP} as ${\rm\bf{F}}:=\{\bx\in \R^n \colon g_i(\bx)\leq 0, \ i=1,\ldots,m\}.$
Note that the feasible set ${\rm\bf{F}}$ is no longer required to be compact.

Let $d_0:=\max_{j=1,\ldots,n}d_j.$
From now on, we assume that (i) the objective function of the problem~\eqref{CP} is bounded from below on ${\bf F};$ and (ii) $d_0 > r,$ which guarantees that the considered polynomials satisfy the SPLD structure.

\begin{assumption}\label{assumption3}
There exists $\widehat \bx\in {\rm\bf{F}}$ such that $g_i(\widehat \bx)<0$ for all $i=1,\ldots,m.$
\end{assumption}

Now, we formulate the Lagrangian dual problem of the problem \eqref{CP} as follows:
\begin{align}\label{LD}
\sup\limits_{\substack{\mu\in\R,\lambda_i\geq0 }}\left\{ \mu \colon f+\sum_{i=1}^m\lambda_ig_i-\mu\geq0\right\}.\tag{${\rm LD}$}
\end{align}
It is well-known that strong duality holds between \eqref{CP} and \eqref{LD} if Assumption~\ref{assumption3} is satisfied, i.e., ${\rm val}\eqref{CP}={\rm val}\eqref{LD}$; see, e.g., \cite[Section~5.3]{Boyd2004}.

Observe that $f$ and $g_i,$ $i=1,\ldots,m,$ are also SOS-convex polynomials; moreover, by \cite[Theorem~3.3]{Lasserre2009}, the problem \eqref{CP} exhibits an exact sum-of-squares relaxation under some suitable condition (e.g., Assumption~\ref{assumption3}), i.e., ${\rm val}\eqref{CP}={\rm val}\eqref{LDSOS},$ where \eqref{LDSOS} is given by
\begin{align}\label{LDSOS}
\sup\limits_{\substack{\mu\in\R,\lambda_i\geq0,\sigma\in\Sigma[\bx]_{d_0} }}\left\{ \mu \colon f+\sum_{i=1}^m\lambda_ig_i-\mu=\sigma\right\},\tag{${\rm D}_{\rm sos}$}
\end{align}
which can be represented by an SDP problem.

For the dual problem \eqref{LD}, compared to the SOS relaxation \eqref{LDSOS}, we now consider the following relaxation problem:
\begin{align}\label{D}
\sup\limits_{\substack{\mu\in\R,\lambda_i\geq0,\\ \sigma\in\Sigma[\bx]_{r}, \sigma_j\in\Sigma[x_j]_{d_j} }}\left\{ \mu \colon f+\sum_{i=1}^m\lambda_ig_i-\mu=\sigma+\sum_{j=1}^n\sigma_j\right\}.\tag{${\rm D_{\rm sos}^{\rm SPLD}}$}
\end{align}
It is known that, since $\sigma+\sum_{j=1}^n\sigma_j\in \Sigma[\bx]_{d_0}$ for all $\sigma\in\Sigma[\bx]_{r}$ and all $\sigma_j\in\Sigma[x_j]_{d_j},$ we have
\begin{align*}
{\rm val}\eqref{D}\leq{\rm val}\eqref{LDSOS}.
\end{align*}

\begin{remark}{\rm
Note that the sizes of the semidefinite constraints in the two problems \eqref{LDSOS} and \eqref{D} differ significantly.
The semidefinite constraint in the problem \eqref{LDSOS} is of size $s(n,d_0),$ whereas the size of the largest semidefinite constraint in the problem \eqref{D} is
\begin{align*}
\max\{d_0+1,s(n,r)\},
\end{align*}
i.e., the size of the problem \eqref{D} can be much smaller than the problem \eqref{LDSOS} if $d_0\gg r.$
}\end{remark}

Similar to the approach in Section~\ref{subsection3.1} for the problem \eqref{Dkdr}, we can reformulate the problem \eqref{D} as the following SDP problem:
\begin{align*}
\sup\limits_{\substack{\mu\in\R,\, \lambda_i\geq0, \\ X\in \mathcal{S}^{{s(n,r)}}_+,\, X_j\in \mathcal{S}^{d_j+1}_+ }}\left\{ \mu :  \left(f+\sum_{i=1}^m\lambda_ig_i-\mu\right)_\ba=\langle B_\ba, X\rangle+\sum_{j=1}^n\langle B_{\ba}^j, X_j\rangle, \ \forall \ba\in \N^n_{2d_0}\right\},
\end{align*}
where the matrices $B_\ba\in \mathcal{S}^{s(n,r)}$  and $B_{\ba}^j\in\mathcal{S}^{d_j+1},$ $j=1,\ldots,n,$ are defined as in \eqref{Ba1} for all $\ba\in \N^n_{2d_0}.$
Then its Lagrangian dual problem can be formulated as follows:
\begin{align}\label{SDP}
\inf\limits_{\substack{\by\in\R^{s(n,2d_0)}}} \quad& \sum_{\ba\in\N^n_{2d_0}}f_\ba y_\ba \tag{${\rm \widetilde D}_{\rm sos}^{\rm SPLD}$} \\
{\rm s.t. }  \qquad\  &  \sum_{\ba\in\N^n_{2d_0}}(g_i)_\ba y_\ba\leq0, \  i=1,\ldots,m, \nonumber \\
&\sum_{\ba\in\N^n_{2d_0}}y_\ba (\lceil x_j\rceil_{d_j}\lceil x_j\rceil_{d_j}^T)_\ba\succeq0, \ j=1,\ldots,n, \nonumber \\
&\sum_{\ba\in\N^n_{2d_0}}y_\ba (\lceil \bx\rceil_r\lceil \bx\rceil_r^T)_\ba\succeq0,  \nonumber \\
&   y_{\bze}=1.\nonumber
\end{align}

The following result shows that the strong duality holds for the problems \eqref{CP}, \eqref{D}, and \eqref{SDP}.
\begin{theorem}
For the problem \eqref{CP}$,$ assume that $p_i,$ $i=0,1,\ldots,m,$ are SOS-convex polynomials.
If {\rm Assumption~\ref{assumption3}} holds$,$ then we have
\begin{align*}
{\rm val}\eqref{CP}={\rm val}\eqref{D}={\rm val}\eqref{SDP}
\end{align*}
and the problem \eqref{D} has an optimal solution.
\end{theorem}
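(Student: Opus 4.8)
The plan is to establish the chain of (in)equalities by going around a loop: ${\rm val}\eqref{CP}\ge{\rm val}\eqref{LDSOS}\ge{\rm val}\eqref{D}\ge{\rm val}\eqref{SDP}\ge{\rm val}\eqref{CP}$, so that all four quantities coincide; then separately argue attainment in \eqref{D}. The outer equality ${\rm val}\eqref{CP}={\rm val}\eqref{LDSOS}$ is already available: since each $p_i$ is SOS-convex and each $s_i$ is a separable convex polynomial (hence a sum of univariate nonnegative-Hessian polynomials, which are SOS-convex), the polynomials $f,g_i$ are SOS-convex, and under Assumption~\ref{assumption3} the exact SOS relaxation result \cite[Theorem~3.3]{Lasserre2009} gives ${\rm val}\eqref{CP}={\rm val}\eqref{LDSOS}$. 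The inequality ${\rm val}\eqref{D}\le{\rm val}\eqref{LDSOS}$ is noted in the text (any feasible point of \eqref{D} is feasible for \eqref{LDSOS} because $\sigma+\sum_j\sigma_j\in\Sigma[\bx]_{d_0}$). Weak duality between the SDP reformulation of \eqref{D} and its Lagrangian dual \eqref{SDP} gives ${\rm val}\eqref{SDP}\ge{\rm val}\eqref{D}$, and feasibility of \eqref{SDP} for the original convex problem (a feasible $\by$ obtained from any $\bx\in{\bf F}$ via $y_\ba=\bx^\ba$ satisfies all constraints, using convexity of the $g_i$) gives ${\rm val}\eqref{SDP}\ge{\rm val}\eqref{CP}$... wait — I want the reverse; so instead I argue ${\rm val}\eqref{SDP}\le{\rm val}\eqref{CP}$ by observing that the moment sequence of an optimal $\bx^*$ of \eqref{CP} is feasible for \eqref{SDP} with objective value $f(\bx^*)$. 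Assembling: ${\rm val}\eqref{CP}={\rm val}\eqref{LDSOS}\ge{\rm val}\eqref{D}$ and ${\rm val}\eqref{D}\le{\rm val}\eqref{SDP}\le{\rm val}\eqref{CP}$, which together force ${\rm val}\eqref{CP}={\rm val}\eqref{D}={\rm val}\eqref{SDP}$ — but this requires the one genuinely new inequality ${\rm val}\eqref{D}\ge{\rm val}\eqref{CP}$ (equivalently ${\rm val}\eqref{D}\ge{\rm val}\eqref{LDSOS}$).

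To get ${\rm val}\eqref{D}\ge{\rm val}\eqref{LDSOS}$ — the crux — I would take an optimal $(\mu^*,\lambda^*,\sigma^*)$ of \eqref{LDSOS} (which exists since \eqref{LDSOS} is solvable under Assumption~\ref{assumption3}) and show the polynomial $h:=f+\sum_i\lambda_i^*g_i-\mu^*$ admits the structured decomposition $\sigma+\sum_j\sigma_j$ with $\sigma\in\Sigma[\bx]_r$, $\sigma_j\in\Sigma[x_j]_{d_j}$. The point is that $h$ is itself an SOS-convex, nonnegative polynomial with exactly the SPLD shape required by Lemma~\ref{lemma4}: its separable part is $\sum_j\bigl(\sum_{i}\lambda_i^* u_i^j(x_j)\bigr)$ plus the constant $-\mu^*$, which is a convex univariate sum (each $u_i^j$ convex, $\lambda_i^*\ge0$), and its remaining part $\sum_i\lambda_i^* p_i + p_0$ is SOS-convex of degree at most $2r$. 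Since $h=f+\sum_i\lambda_i^*g_i-\mu^*\ge0$ on $\R^n$ (as $\mu^*={\rm val}\eqref{LD}={\rm val}\eqref{CP}$ and $\lambda^*$ is dual-optimal), Lemma~\ref{lemma4} applies and yields $h=\sigma+\sum_j\sigma_j$ with $\sigma\in\Sigma[\bx]_r$, $\sigma_j\in\Sigma[x_j]_{d_j}$. Hence $(\mu^*,\lambda^*,\sigma,\sigma_1,\dots,\sigma_n)$ is feasible for \eqref{D}, giving ${\rm val}\eqref{D}\ge\mu^*={\rm val}\eqref{LDSOS}$.

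Finally, attainment of \eqref{D}: the argument in the previous paragraph does more than bound the value — it exhibits an actual feasible point of \eqref{D} achieving $\mu^*={\rm val}\eqref{D}$, so \eqref{D} is solved. (Alternatively, one can note that the SDP reformulation of \eqref{D} is strictly feasible — take $\lambda=0$ and $h=f-\mu$ with $\mu$ small; one needs $f-\mu$ to be written as a strictly-feasible Gram combination, which follows because $f$ itself is SOS-convex nonnegative after subtracting its minimum and Lemma~\ref{lemma4}'s construction can be perturbed — and then invoke standard SDP strong duality; but the direct construction is cleaner.) The main obstacle is verifying the hypotheses of Lemma~\ref{lemma4} for $h$ precisely: one must confirm that the degree bookkeeping matches (the separable part of $h$ has degree $\le 2d_j$ in $x_j$ and the SOS-convex part has degree $\le 2r$), which is exactly where the standing assumption $d_0>r$ and the SPLD structure of $f,g_i$ enter; and one must be careful that $p_i$ may "include separable polynomial terms" (as the text warns), so the decomposition of $h$ into separable-plus-SOS-convex is not literally $f$'s given decomposition summed, but still exists with the stated degree bounds since lumping extra separable pieces into the $u_i^j$ only helps.
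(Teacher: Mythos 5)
Your proposal is correct and follows essentially the same route as the paper: obtain optimal multipliers $(\mu^*,\lambda^*)$ from strong duality, observe that $f+\sum_i\lambda_i^*g_i-\mu^*$ is a nonnegative polynomial of the separable-convex-plus-SOS-convex form, and apply Lemma~\ref{lemma4} to produce a feasible point of \eqref{D} attaining ${\rm val}\eqref{CP}$, with the remaining inequalities handled by weak duality and moment-vector feasibility as in Theorem~\ref{thm1}. The only cosmetic difference is that you source the multipliers from \eqref{LDSOS} via the exactness result of Lasserre, whereas the paper takes them directly from the Lagrangian dual \eqref{LD} under Slater's condition; both yield the same $\mu^*={\rm val}\eqref{CP}$ and the same application of Lemma~\ref{lemma4}.
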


\begin{proof}
It suffices to show that ${\rm val}\eqref{CP}\leq{\rm val}\eqref{D}$ since the rest of the proof is similar to that of Theorem~\ref{thm1}.
Under Assumption~\ref{assumption3}, by strong duality for the convex optimization problems (see, e.g., \cite{Boyd2004}), we have ${\rm val}\eqref{CP}={\rm val}\eqref{LD}.$

Let $(\overline\mu,\overline\lambda_1,\ldots,\overline\lambda_m)\in\R\times\R^m_+$ be a maximizer of the problem \eqref{LD}.
Then, by structure of $f$ and each $g_i$, it yields that $f(\bx)+\sum_{i=1}^m\overline\lambda_ig_i(\bx)-\overline\mu$ is an SOS-convex SPLD polynomial.
Moreover, since $f+\sum_{i=1}^m\overline\lambda_ig_i-\overline\mu$ is a nonnegative polynomial in $\bx,$ by Lemma~\ref{lemma4}, there exist $\overline\sigma\in\Sigma[\bx]_{r}$ and $\overline\sigma_j\in\Sigma[x_j]_{d_j},$ $j=1,\ldots,n,$ such that
\begin{align*}
f+\sum_{i=1}^m\overline\lambda_ig_i-\overline\mu=\overline\sigma+\sum_{j=1}^n\overline\sigma_j,
\end{align*}
and so, $(\overline\mu,(\overline\lambda_i),\overline\sigma,(\overline\sigma_j))$ is feasible for the problem \eqref{D}.
Hence, ${\rm val}\eqref{CP}=\overline\mu\leq {\rm val}\eqref{D}.$
\end{proof}

In what follows, we provide a way to recover an optimal solution to the problem \eqref{CP} from a solution to the problem \eqref{SDP} whenever $p_i,$ $i=0,1,\ldots,m,$ are SOS-convex polynomials in the problem \eqref{CP}.
\begin{theorem}[Recovery of a solution]\label{thm5}
For the problem \eqref{CP}$,$ assume that $p_i,$ $i=0,1,\ldots,m,$ are SOS-convex polynomials.
Suppose that {\rm Assumption~\ref{assumption3}} holds.
If $\overline \by\in \R^{s(n,2d_0)}$ is an optimal solution to the problem \eqref{SDP}$,$ then $\overline \bx:=(\overline y_\ba)_{|\ba|=1}\in\R^n$ is an optimal solution to the problem \eqref{CP}.
\end{theorem}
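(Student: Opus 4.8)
The plan is to recover the minimizer from the moment solution by combining the known exactness of the SOS-convex SPLD relaxation with the standard argument that, for SOS-convex problems, the degree-one truncation of an optimal moment sequence is feasible and optimal. First I would let $\overline\by\in\R^{s(n,2d_0)}$ be an optimal solution to \eqref{SDP} and set $\overline\bx:=(\overline y_\ba)_{|\ba|=1}$. The first step is to show feasibility of $\overline\bx$ for \eqref{CP}, i.e.\ $g_i(\overline\bx)\le 0$ for all $i$. Since each $g_i$ is SOS-convex, Jensen-type inequalities for moment sequences apply: for a pseudo-moment vector $\overline\by$ satisfying the relevant localizing/moment PSD conditions in \eqref{SDP}, one has $\sum_{\ba}(g_i)_\ba\,\overline y_\ba \;\ge\; g_i\bigl((\overline y_\ba)_{|\ba|=1}\bigr)=g_i(\overline\bx)$ (this is the content of \cite[Theorem~3.3]{Lasserre2009} / the SOS-convex Jensen inequality; see also Lemma~\ref{lemma4} applied to $g_i$ shifted appropriately). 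Combined with the constraint $\sum_{\ba}(g_i)_\ba\,\overline y_\ba\le 0$ from \eqref{SDP}, this gives $g_i(\overline\bx)\le 0$, so $\overline\bx\in{\bf F}$.

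Next I would establish optimality of $\overline\bx$. The same SOS-convex Jensen inequality applied to the objective $f$ (which is SOS-convex since $s_0$ is separable convex and $p_0$ is SOS-convex, hence $f$ is SOS-convex) yields
\begin{equation*}
{\rm val}\eqref{SDP}=\sum_{\ba}f_\ba\,\overline y_\ba\;\ge\; f(\overline\bx)\;\ge\;{\rm val}\eqref{CP},
\end{equation*}
the last inequality because $\overline\bx$ is feasible for \eqref{CP}. On the other hand, the preceding theorem gives ${\rm val}\eqref{CP}={\rm val}\eqref{D}={\rm val}\eqref{SDP}$ (noting strong duality between the SDP reformulation of \eqref{D} and its dual \eqref{SDP}, as already invoked earlier in the excerpt for the analogous pair \eqref{Dkdr}/\eqref{LDkdr}). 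Therefore all the inequalities collapse to equalities, and in particular $f(\overline\bx)={\rm val}\eqref{CP}$, so $\overline\bx$ is an optimal solution to \eqref{CP}.

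The main obstacle is justifying the Jensen-type inequality $\sum_{\ba}h_\ba\,\overline y_\ba \ge h((\overline y_\ba)_{|\ba|=1})$ for an SOS-convex polynomial $h$ and a vector $\overline\by$ that is only guaranteed to satisfy the particular PSD constraints appearing in \eqref{SDP} — namely $\sum_\ba \overline y_\ba(\lceil \bx\rceil_r\lceil \bx\rceil_r^T)_\ba\succeq 0$ and the univariate blocks $\sum_\ba \overline y_\ba(\lceil x_j\rceil_{d_j}\lceil x_j\rceil_{d_j}^T)_\ba\succeq 0$ — rather than the full moment matrix $\M_{d_0}(\overline\by)\succeq 0$ used in Lasserre's original argument. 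The clean way to handle this is to appeal to Lemma~\ref{lemma4}: write $h - h(\overline\bx) - \langle\nabla h(\overline\bx),\bx-\overline\bx\rangle = \sigma + \sum_{j=1}^n\sigma_j$ with $\sigma\in\Sigma[\bx]_r$ and $\sigma_j\in\Sigma[x_j]_{d_j}$ (valid because the left side is a nonnegative SOS-convex SPLD polynomial vanishing at its minimizer $\overline\bx$), then apply the linear functional $L_{\overline\by}(p):=\sum_\ba p_\ba\overline y_\ba$ to both sides; the PSD constraints in \eqref{SDP} are exactly what is needed to ensure $L_{\overline\by}(\sigma)\ge 0$ and $L_{\overline\by}(\sigma_j)\ge 0$, while $L_{\overline\by}$ of the affine term $h(\overline\bx)+\langle\nabla h(\overline\bx),\bx-\overline\bx\rangle$ equals $h(\overline\bx)$ since $y_{\bze}=1$ and $(\overline y_\ba)_{|\ba|=1}=\overline\bx$. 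This reproduces the needed inequality with exactly the structured PSD certificates available, and I expect the remaining bookkeeping (degrees, that $h\in\Sigma[\bx]_{d_0}$ shifted appropriately, matching the support $\N^n_{2d_0}$) to be routine.
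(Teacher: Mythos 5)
Your proposal is correct and follows essentially the same route the paper intends: the paper's proof of Theorem~\ref{thm5} simply defers to the argument of Theorem~3.3(b) in \cite{Lasserre2009}, and your write-up is a faithful adaptation of that argument, correctly identifying that the full moment matrix $\M_{d_0}(\overline\by)\succeq 0$ is unavailable here and replacing the usual SOS-convex Jensen step by the structured decomposition $h-h(\overline\bx)-\langle\nabla h(\overline\bx),\bx-\overline\bx\rangle=\sigma+\sum_j\sigma_j$ with $\sigma\in\Sigma[\bx]_r$, $\sigma_j\in\Sigma[x_j]_{d_j}$, which is exactly what the PSD blocks of \eqref{SDP} certify as nonnegative under $L_{\overline\by}$. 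The remaining steps (feasibility from the localizing inequalities, optimality from ${\rm val}\eqref{CP}={\rm val}\eqref{SDP}$) match the intended proof.
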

\begin{proof}
This result can be derived by a similar argument to the proof of Theorem~3.3 (b) in \cite{Lasserre2009}.
\end{proof}

We finish this part by presenting the following simple example, which illustrates how an optimal solution to the problem \eqref{CP} can be found according to Theorem~\ref{thm5}.
\begin{example}\label{ex1}{\rm
Consider the following convex optimization problem:
\begin{align}\label{CP1}
\inf\limits_{(x_1,x_2)\in\R^2} \quad & f(x_1,x_2):=x_1^8-x_1^6+x_1^4+x_1^2x_2^2+x_2^4+x_1^2x_2+x_1x_2^2+x_1^2+x_2^2  \tag{${\rm CP}_1$} \\
 {\rm s.t.\ } \quad \ \ \,  & g_1(x_1,x_2):=x_1^2+x_2^2-1\leq 0.  \nonumber
\end{align}
Observe that both $f$ and $g_1$ are (SOS-)convex polynomials.
By the KKT condition for the problem~\eqref{CP1}, one can verify that $(\overline x_1,\overline x_2)=(0,0)$ is the unique optimal solution.

Now, we formulate the sum-of-squares relaxation problem of \eqref{CP1}:
\begin{align}\label{LDSOS1}
\sup\limits_{\substack{\mu\in\R,\lambda_1\geq0 }}\left\{ \mu \colon f+\lambda_1g_1-\mu\in \Sigma[(x_1,x_2)]_{4}\right\}.\tag{${\rm D}_{\rm sos}^1$}
\end{align}
Note that the semidefinite matrix associated with the sum-of-squares decomposition of the problem \eqref{LDSOS1} has size $s(2,4)=15.$
Let $(\mu,\lambda_1)$ be a feasible solution of the problem \eqref{LDSOS1} and define $\phi:=f+\lambda_1g_1-\mu.$
Then, by \cite[Theorem~1]{Reznick1978}, we check that
\begin{equation*}
\frac{1}{2}\mathcal{N}(\phi)\cap \N^2=\{(0,0),(1,0),(0,1),(2,0), (1,1), (0,2),(3,0),(2,1),(4,0)\}.
\end{equation*}
With this result, we formulate the dual problem of the problem \eqref{LDSOS1} as follows:
\begin{align}\label{LSDP1}
\inf\limits_{\by\in\R^{25}}\quad  &y_{80}-y_{60}+y_{40}+y_{22}+y_{04}+y_{21}+y_{12}+y_{20}+y_{02}  \tag{${\rm SDP}_1$} \\
 {\rm s.t.\ } \quad  & y_{20}+y_{02}-1\leq 0,   \nonumber\\
&\left(\begin{array}{ccccccccc}
y_{00} & y_{10} & y_{01} & y_{20} & y_{11} & y_{02} & y_{30} & y_{21} & y_{40} \\
y_{10} & y_{20} & y_{11} & y_{30} & y_{21} & y_{12} & y_{40} & y_{31} & y_{50} \\
y_{01} & y_{11} & y_{02} & y_{21} & y_{12} & y_{03} & y_{31} & y_{22} & y_{41} \\
y_{20} & y_{30} & y_{21} & y_{40} & y_{31} & y_{22} & y_{50} & y_{41} & y_{60} \\
y_{11} & y_{21} & y_{12} & y_{31} & y_{22} & y_{13} & y_{41} & y_{32} & y_{51} \\
y_{02} & y_{12} & y_{03} & y_{22} & y_{13} & y_{04} & y_{32} & y_{23} & y_{42} \\
y_{30} & y_{40} & y_{31} & y_{50} & y_{41} & y_{32} & y_{60} & y_{51} & y_{70} \\
y_{21} & y_{31} & y_{22} & y_{41} & y_{32} & y_{23} & y_{51} & y_{42} & y_{61} \\
y_{40} & y_{50} & y_{41} & y_{60} & y_{51} & y_{42} & y_{70} & y_{61} & y_{80} \\
\end{array}\right)\succeq0, \nonumber \\
& y_{00}=1.  \nonumber
\end{align}
Observe that the size of the semidefinite matrix in the problem \eqref{LSDP1} is $9,$ which is smaller than $15.$
Solving the semidefinite programming problem \eqref{LSDP1} using CVX \cite{Grant2011}, we obtain the optimal value ${\rm val}\eqref{LSDP1} = 0$ and an optimal solution
{\footnotesize
\begin{align*}
\overline \by
&=(\overline y_{00}, \overline y_{10}, \overline y_{01}, \overline y_{20}, \overline y_{11}, \overline y_{02}, \overline y_{30},\overline y_{21},\overline y_{12},\overline y_{03}, \overline y_{40}, \overline y_{31}, \overline y_{22}, \overline y_{13}, \overline y_{04}, \overline y_{50},\overline y_{41},\overline y_{32},\overline y_{23}, \overline y_{60}, \overline y_{51}, \overline y_{42}, \overline y_{70},\overline y_{61}, \overline y_{80})\\
&=(1,0,0,0,0,0,0,0,0,0,0,0,0,0,0,0,0,0,0,0,0,0,0,0,0).
\end{align*}
}

Let us now observe that $f$ can be rewritten in the following form:
\begin{equation*}
f(x_1,x_2):=\left\{x_1^8-x_1^6+\textstyle{\frac{1}{3}}x_1^4\right\}+\left\{\textstyle{\frac{2}{3}}x_1^4+x_1^2x_2^2+x_2^4+x_1^2x_2+x_1x_2^2+x_1^2+x_2^2\right\}.
\end{equation*}
Let $u_0^1(x_1):=x_1^8-x_1^6+\frac{1}{3}x_1^4,$ $p_0(x_1,x_2):=\frac{2}{3}x_1^4+x_1^2x_2^2+x_2^4+x_1^2x_2+x_1x_2^2+x_1^2+x_2^2,$ and let $p_1(x_1,x_2)=x_1^2+x_2^2-1.$
Then $f(x_1,x_2)=u_0^1(x_1)+p_0(x_1,x_2)$ and $g_1(x_1,x_2)=p_1(x_1,x_2).$
Moreover, we can verify that $u_0^1,$ $p_0,$ and $p_1$ are SOS-convex polynomials.
Now we consider the following SDP problem for \eqref{CP1}:
\begin{align}\label{SDP1}
\inf\limits_{\by\in\R^{19}} \quad & y_{80}-y_{60}+y_{40}+y_{22}+y_{04}+y_{21}+y_{12}+y_{20}+y_{02}  \tag{${\rm SDP}_2$} \\
 {\rm s.t.\ } \quad  & y_{20}+y_{02}-1\leq 0,   \nonumber\\
&\left(\begin{array}{ccccc}
 y_{00} & y_{10} & y_{20} & y_{30} & y_{40} \\
 y_{10} & y_{20} & y_{30} & y_{40} & y_{50} \\
 y_{20} & y_{30} & y_{40} & y_{50} & y_{60} \\
 y_{30} & y_{40} & y_{50} & y_{60} & y_{70} \\
 y_{40} & y_{50} & y_{60} & y_{70} & y_{80} \\
\end{array}\right)\succeq0, \nonumber \\
&\left(\begin{array}{ccccccccc}
 y_{00} & y_{10} & y_{01} & y_{20} & y_{11} & y_{02} \\
 y_{10} & y_{20} & y_{11} & y_{30} & y_{21} & y_{12} \\
 y_{01} & y_{11} & y_{02} & y_{21} & y_{12} & y_{03} \\
 y_{20} & y_{30} & y_{21} & y_{40} & y_{31} & y_{22} \\
 y_{11} & y_{21} & y_{12} & y_{31} & y_{22} & y_{13} \\
 y_{02} & y_{12} & y_{03} & y_{22} & y_{13} & y_{04} \\
\end{array}\right)\succeq0, \nonumber\\
&    y_{00}=1.  \nonumber
\end{align}
Note here that the maximum size of the matrices in the problem~\eqref{SDP1} is $6,$ which is less than the size of the semidefinite matrix in the problem \eqref{LSDP1}.
Solving \eqref{SDP1} using CVX \cite{Grant2011}, we obtain the optimal value ${\rm val}\eqref{SDP1} = 0$ and an optimal solution
\begin{align*}
\overline \by
&=(\overline y_{00}, \overline y_{10}, \overline y_{01}, \overline y_{20}, \overline y_{11}, \overline y_{02}, \overline y_{30},\overline y_{21},\overline y_{12},\overline y_{03}, \overline y_{40}, \overline y_{31}, \overline y_{22}, \overline y_{13}, \overline y_{04}, \overline y_{50}, \overline y_{60}, \overline y_{70},\overline y_{80})\\
&=(1,0,0,0,0,0,0,0,0,0,0,0,0,0,0,0,0,0,0).
\end{align*}
So, $(\overline x_1,\overline x_2)=(0,0)=( \overline y_{10}, \overline y_{01}),$ and thus, Theorem~\ref{thm5} holds.
}\end{example}

\subsection{SOS-convex SPLD polynomial regression}\label{subsec:5}
Here, we consider the problem of {\em convex polynomial} regression.
Convex regression is a fundamental problem in machine learning and statistics:
it seeks a convex function $f$ that maps {\em feature vectors} to {\em response variables}.
The goal is to find an estimator for $f$ given noisy feature-response data.
The research history of addressing this problem dates back to the study of estimation of the convex least squares estimator by Hildreth \cite{Hildreth1954} (see also Holloway \cite{Holloway1979}).
In this subsection, we focus on the scenario where the dataset is large and the dimension is relatively small, and we aim to estimate $f$ using an SOS-convex SPLD polynomial.

Assume now that we have $m$ datapoints $(\bx_i, y_i),\ i = 1, \ldots, m,$ of an unknown convex function $f : \R^n \to \R.$
To extrapolate $f$ at new points, we want to find a (convex) polynomial $p: \R^n \to \R$ (of a given degree) that best explains the observations.
To this end, we consider the following sum of the absolute deviations between the
observed values $y_i$ and the predicted ones:
\begin{align*}
\min_{p} \sum_{i = 1}^m | p(\bx_i) - y_i|.
\end{align*}

In order to exploit the convexity of $f$, one sometimes has to impose a convexity constraint on the regressor $p(\bx).$ However, the convexity constraint makes the regression problem intractable.
Very recently, to overcome the intractability of a convexity constraint on $p(\bx),$ Curmei and Hall \cite{Curmei2025} impose an {\em SOS-convexity constraint} on $p(\bx)$ and solve the regression problem via semidefinite programming; moreover, Ahmadi et al. \cite[Section 6.2]{Ahmadi2022} approximate $f$ with a {\em convex SPQ polynomial}, which helps significantly with the scalability of the resulting regression problem.

We now consider the following SOS-convex SPLD regression problem, which covers the convex SPQ regression problem,
\begin{align}\label{regression1}
\min_{p = s+l}\quad & \sum_{i = 1}^m | p(\bx_i) - y_i|\\
{\textrm{s.t.}}\quad\  & s(\bx) = \sum_{j = 1}^n s_j(x_j),\ s_j \in \R[x_j]_{2d_0},\ j = 1, \ldots, n, \  l \in \R[\bx]_{2r}, \notag \\
&H(\bx)=L(\bx)+{\rm diag}\left(u_1(x_1),\ldots,u_n(x_n)\right), \quad \forall \bx\in\R^n, \notag\\
&u_j \in \Sigma[x_j]_{d_0-1},\ j = 1, \ldots, n, \notag\\
&L(\bx) \textrm{ is an $n \times n$ SOS matrix of degree $2r - 2$}, \notag
\end{align}
where the decision variables are the coefficients of $p(\bx),$ the degrees $2d_0$ and $2r$ are fixed, $H(\bx)$ are the Hessian matrix of $p(\bx)$, and ${\rm diag}\left(u_1(x_1),\ldots,u_n(x_n)\right)$ is the diagonal matrix with the vector $\left(u_1(x_1),\ldots,u_n(x_n)\right)^T$ on its diagonal.

For our numerical experiment, we consider the following class of convex functions\footnote{This class of convex functions in \eqref{logfunction} is motivated by Ahmadi et al. \cite[Section 6.2]{Ahmadi2022}, where the quadratic term is modified by a quartic one.}:

\begin{align}\label{logfunction}
f_{\va^\ell,\vb^\ell}(\bx) = \log \left(\sum_{j = 1}^n a_j^\ell e^{b_j^\ell x_j} \right)+\left(\bx^T\left((\va^\ell) (\va^\ell)^T+I\right)\bx\right)^2+(\vb^\ell)^T(\bx),
\end{align}
where the entries of $\va^\ell, \vb^\ell \in \R^n$ are given uniformly and independently from the interval $[0, 2]$ and $[-1, 1],$ respectively, and $I$ is the $n \times n$ identity matrix.
We consider $100$ functions randomly chosen from the class of convex functions $f_{\va^\ell,\vb^\ell}$  in $6$ variables. 
For each instance $\ell = 1, \ldots, 100$, we have a training set of $m = 400$ random vectors $\bx_i \in \R^6$ drawn independently from the (multivariate) standard normal distribution.
The values $y_i$ are then computed as $y_i = f_{\va^\ell,\vb^\ell}(\bx_i)+\epsilon_i,$ where $\epsilon_i$ is again chosen independently from the standard normal distribution.
We focus on polynomials of degree $2d_0 = 6$ ($2r = 4$ for the lower-degree term of SPLD polynomials), and compare the performance of our problem~\eqref{regression1} with that of the following two regression models:
\begin{itemize}
\item SOS-convex polynomial regression problem \cite{Curmei2025},
\begin{align*}
\min_{p \in \R[\bx]_{2d_0}}\quad  \sum_{i = 1}^m | p(\bx_i) - y_i|\quad
{\textrm{s.t.}}\quad\ H(\bx) \textrm{ is an $n \times n$ sos-matrix of degree $2d_0 - 2$};
\end{align*}
\item convex SPQ polynomial regression problem \cite{Ahmadi2022},
\begin{align*}
\min_{p = s+q}\quad & \sum_{i = 1}^m | p(\bx_i) - y_i|\\
{\textrm{s.t.}}\quad\  & s(\bx) = \sum_{j = 1}^n s_j(x_j),\ s_j \in \R[x_j]_{2d_0},\ j = 1, \ldots, n, \  q \in \R[\bx]_{2}, \\
&H(\bx)=Q+{\rm diag}\left(u_1(x_1),\ldots,u_n(x_n)\right), \quad \forall \bx\in\R^n, \\
&Q \in \mathcal{S}^{n}_+, \ u_j \in \Sigma[x_j]_{d_0-1},\ j = 1, \ldots, n.
\end{align*}
\end{itemize}

\begin{figure}[t]
\centering
\begin{subfigure}{0.45\textwidth}
\centering
\includegraphics[width=\linewidth]{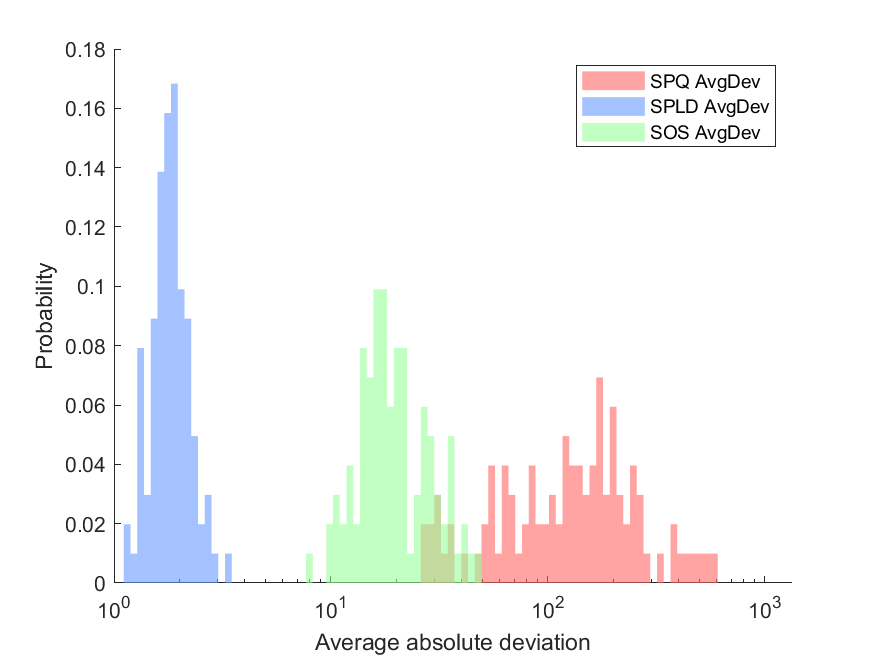}
\end{subfigure}
\hfill
\begin{subfigure}{0.45\textwidth}
\centering
\includegraphics[width=\linewidth]{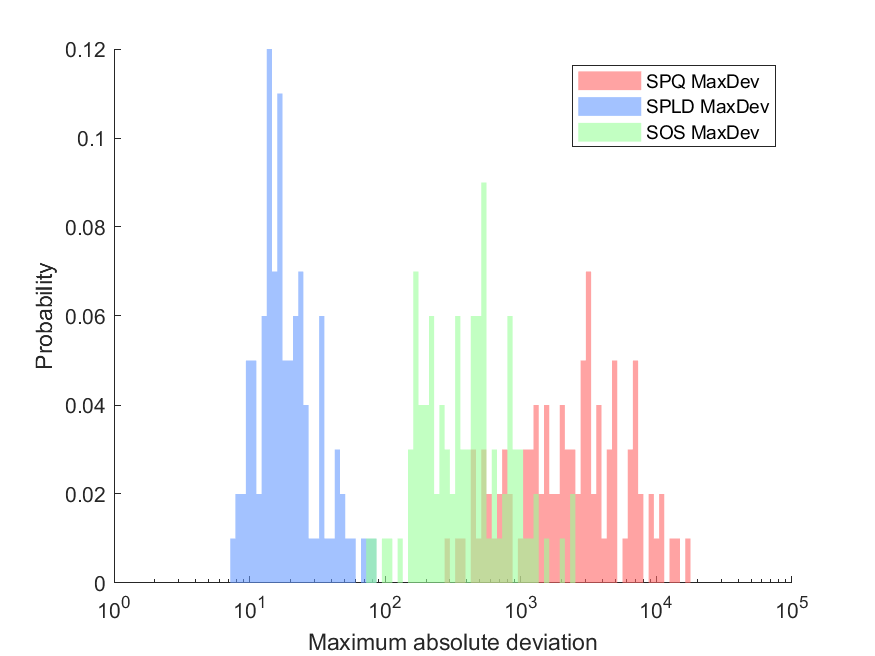}
\end{subfigure}
\caption{Log-scaled histograms of the average (left) and maximum (right) absolute deviation errors over 100 independent test runs, comparing three regression models: SOS-convex, convex SPQ, and SOS-convex SPLD. The $x$-axis represents the deviation values (in logarithmic scale), and the $y$-axis shows the normalized frequency (i.e., empirical probability) of observing those deviations across the test runs.}
\label{fig:1}
\end{figure}

Now, we solve the three regression problems using the training set and obtain an optimal polynomial $p^*$ in each case.
For each instance $\ell = 1, \ldots, 100$, we have a test set of $t = 100$ random vectors $\bx_i \in \R^6$ also drawn independently from the (multivariate) standard normal distribution.
By running 100 times, we then calculate the means of average absolute deviation error and of the maximum absolute deviation error over the test set:
\begin{align*}
\textrm{AvgDev} = \frac{1}{100} \sum_{\ell = 1}^{100} \textrm{AvgDev}_{\va^\ell,\vb^\ell} \quad \textrm{and} \quad
\textrm{MaxDev} = \frac{1}{100} \sum_{\ell = 1}^{100} \textrm{MaxDev}_{\va^\ell,\vb^\ell},
\end{align*}
where
\begin{align*}
\textrm{AvgDev}_{\va^\ell,\vb^\ell} = \frac{1}{t} \sum_{i=1}^{t} |f_{\va^\ell,\vb^\ell}(\bx_i) - p^*(\bx_i)| \quad \textrm{and} \quad
\textrm{MaxDev}_{\va^\ell,\vb^\ell} = \max_{1 \leq i \leq t} |f_{\va^\ell,\vb^\ell}(\bx_i) - p^*(\bx_i)|,
\end{align*}
respectively.

Figure~\ref{fig:1} shows the test error distributions of three regression models: SOS-convex, convex SPQ, and SOS-convex SPLD regression.
These results suggest that the SOS-convex SPLD model provides better accuracy and robustness than the other two models.
In Table~\ref{table-0}, we compare the test errors over $100$ instances: as shown, the SOS-convex SPLD polynomial regression achieves significantly smaller errors than both the SOS-convex and convex SPQ regression models.

\begin{table}[t]
\caption{Comparison of average absolute deviation error and maximum absolute deviation error of three regression problems.}\label{table-0}
\small
\begin{tabular}{cccc}
\hline
  & SOS-convex regression \cite{Curmei2025} & convex SPQ regression \cite{Ahmadi2022} & SOS-convex SPLD regression \\ \hline
\textrm{AvgDev} & 20.9509 & 160.3130 & 1.8449 \\ \hline
\textrm{MaxDev} & 517.4961 & 3.5283e+03 & 21.0255 \\ \hline
\end{tabular}
\end{table}

\section{Conclusions and further remarks}\label{sect:6}
In this paper, we have introduced a new class of structured polynomials, called SPLD polynomials, which extends the concept of SPQ polynomials \cite{Ahmadi2022}.
We have also proposed a variant of the bounded degree SOS hierarchy, called BSOS-SPLD, to solve a class of optimization problems involving SPLD polynomials.
The numerical experiments have illustrated that our approach can achieve better performance than the standard BSOS hierarchy \cite{Lasserre2017}.

On the other hand, as demonstrated with the test problem ({\verb"SPM"}), the standard BSOS hierarchy showed limited performance on SPLD problems involving high-degree polynomials with a small number of variables.
In contrast, our BSOS-SPLD hierarchy handled such cases effectively.
An important direction for future work is to address large-scale SPLD polynomial optimization (with high-degree and many variables) (cf. \cite{Weisser2018}).
We plan to investigate this in a forthcoming study.

\subsection*{Acknowledgments}
The authors would like to thank the associate editor and two anonymous referees for their careful reading with constructive comments and suggestions on an earlier version of this paper.

\subsection*{Funding}
Liguo Jiao was partially supported by the Chinese National Natural Science Foundation under grant numbers 12471478, 12371300, and was supported by the Education Department of Jilin Province (no. JJKH20241405KJ).
Jae Hyoung Lee was supported by the National Research Foundation of Korea (NRF) grant funded by the Korea government (MSIT) (NRF-2021R1C1C2004488).

\subsection*{Availability of data and materials}
The codes and datasets generated in this study are available from the corresponding author on reasonable request.

\subsection*{Conflict of interest}
The authors have no conflict of interest to declare that are relevant to the content of this article.

\section*{Appendix}

\begin{proof}[Proof of Theorem~\ref{thm1}]
Note first that, for all $k\in\N,$ the inequality ${\rm val}\eqref{Dkdr}\leq{\rm val}\eqref{LDkdr}$ always holds by the weak duality for semidefinite programming \cite{Vandenberghe1995}.
We now prove that ${\rm val}\eqref{LDkdr}\leq{\rm val}\eqref{P}$ for all $k\in\N.$
To see this, let $\bx$ be any feasible solution of the problem \eqref{P}, and let $\by:=(y_\ba)=(\bx^\ba)\in\R^{s(n,2\dd)}.$
Then we have $y_{\bze}=1$.
Moreover,  we see that
\begin{align*}
0\leq h_{\p,\q}(\bx)= \sum_{\ba\in\N^n_{2\dd}}(h_{\p,\q})_\ba \bx^\ba=\sum_{\ba\in\N^n_{2\dd}}(h_{\p,\q})_\ba y_\ba
\end{align*}
for all $(\p,\q)\in N_k.$
Observe that
\begin{align*}
\lceil x_j\rceil_{d_j}\lceil x_j\rceil_{d_j}^T&=\sum_{\ba\in\N^n_{2\dd}}\bx^\ba (\lceil x_j\rceil_{d_j}\lceil x_j\rceil_{d_j}^T)_\ba=\sum_{\ba\in\N^n_{2\dd}}y_\ba (\lceil x_j\rceil_{d_j}\lceil x_j\rceil_{d_j}^T)_\ba\succeq0, \ j=1,\ldots,n,\\
\lceil\bx\rceil_r\lceil\bx\rceil_r^T&=\sum_{\ba\in\N^n_{2\dd}}\bx^\ba (\lceil \bx\rceil_r\lceil \bx\rceil_r^T)_\ba=\sum_{\ba\in\N^n_{2\dd}}y_\ba (\lceil \bx\rceil_r\lceil \bx\rceil_r^T)_\ba\succeq0.
\end{align*}
Hence $\by\in\R^{s(n,2\dd)}$ is a feasible solution of the problem $\eqref{LDkdr}.$
So, it leads to the fact that
\begin{align*}
f_0(\bx)=\sum_{\ba\in\N^n_{2\dd}}(f_0)_\ba y_\ba \geq {\rm val}\eqref{LDkdr}.
\end{align*}
Since $\bx$ is arbitrary feasible for the problem \eqref{P}, we have that ${\rm val}\eqref{LDkdr}\leq {\rm val}\eqref{P},$ and so,
\begin{align}\label{thm1rel1}
{\rm val}\eqref{Dkdr} \leq {\rm val}\eqref{LDkdr} \leq {\rm val}\eqref{P} \quad \textrm{for all $k \in \N.$}
\end{align}

To conclude the result, we now prove that $\lim_{k\to\infty}{\rm val}\eqref{Dkdr}\geq {\rm val}\eqref{P}.$
To see this, let
\begin{align*}
\mu_k:=\sup\limits_{\substack{\mu\in\R,c_{\p,\q}\geq0 }}\left\{ \mu \colon f_0(\bx)-\sum_{(\p,\q)\in N_k}c_{\p,\q}h_{\p,\q}(\bx)-\mu=0  \ \forall \bx\in\R^n\right\}.
\end{align*}
Note that $\mu_k\leq \mu_{k+1}$ for each $k\in\N.$
Since
\begin{align*}
0\in\left\{ \sigma+\sum_{j=1}^n\sigma_j \colon \sigma\in\Sigma[\bx]_{r}, \sigma_j\in\Sigma[x_j]_{d_j}, \ j=1,\ldots,n\right\},
\end{align*}
we see that for every $k\in\N,$ $\mu_k\leq {\rm val}\eqref{Dkdr}.$
Let $\epsilon>0.$
Then $f_0(\bx)-{\rm val}\eqref{P}+\epsilon>0$ for all $\bx\in {\rm\bf{K}}.$
Then, by Krivine--Stengle's positivstellensatz (Lemma~\ref{KS_positivity}), there exist $k_0\in\N$ and $c_{\p,\q}\geq0,$ $(\p,\q)\in N_{k_0}$ such that
\begin{align*}
f_0(\bx) - \sum_{(\p,\q)\in N_{k_0}}c_{\p,\q}h_{\p,\q}(\bx)-({\rm val}\eqref{P}-\epsilon)=0
\end{align*}
for all $\bx\in \R^n.$
This shows that for all $k\geq k_0,$ ${\rm val}\eqref{Dkdr}\geq\mu_k\geq\mu_{k_0}\geq {\rm val}\eqref{P}-\epsilon,$
and so,
\begin{align*}
\lim_{k\to\infty}{\rm val}\eqref{Dkdr}\geq \lim_{k\to\infty}\mu_k\geq {\rm val}\eqref{P}-\epsilon.
\end{align*}
Since $\epsilon$ is arbitrary, by \eqref{thm1rel1}, the desired results follow.
\end{proof}

\begin{proof}[Proof of Theorem~\ref{thm2}]
Assume that the condition \eqref{rel1_thm2} holds.
Then there exists $\overline \bx\in\R^n$ such that
\begin{align*}
&\sum_{\ba\in\N^n_{2\dd}}\overline y_\ba ({\lceil x_j\rceil_{\bar s}}{\lceil x_j\rceil_{\bar s}}^T)_\ba={\lceil \overline x_j\rceil_{\bar s}}{\lceil \overline x_j\rceil_{\bar s}}^T, \ j=1,\ldots,n,\\
&\sum_{\ba\in\N^n_{2\dd}}\overline y_\ba ({\lceil \bx\rceil_{\bar r}}{\lceil \bx\rceil_{\bar r}}^T)_\ba={\lceil \overline \bx\rceil_{\bar r}}{\lceil \overline \bx\rceil_{\bar r}}^T.]
\end{align*}
From this, we deduce that $\overline y_\ba=(\overline \bx)^\ba$ for all $\ba\in\N^n_{2l}$ and $\overline y_{\ba_j}=(\overline x_j)^{\alpha_j}$ for all $\alpha_j\in\N_{2{\bar s}}.$
Here, $\ba_j \in \N_{2{\bar s}}^n$ is a multi-index such that the $j$-th component is $\alpha_j$ and all other components are equal to $0.$

Since $\overline \by$ is a feasible solution of the problem \eqref{LDkdr}, we have
\begin{equation}\label{rel2_thm2}
\sum_{\ba\in\N^n_{2\dd}}(h_{\p,\q})_\ba \overline y_\ba\geq0, \  (\p,\q)\in N_k.
\end{equation}
Note that, for each $i=0,1,\ldots,m,$ $f_i(\bx)=s_i(\bx)+l_i(\bx),$ $2{\bar s}\geq \deg(s_i),$ and $2{\bar l}\geq \deg (l_i).$
Note also that we can set certain $\dd \geq {\bar s}.$ 
Now, let $i\in\{1,\ldots,m\}$ be any fixed and choose $(\p, \q) \in N_{k}$ such that $p_j = 1$ if $j = i,$ otherwise $p_j = 0,$ and $\q = \bze.$
Then, from \eqref{rel2_thm2}, we have
\begin{align*}
0\leq \sum_{\ba\in\N^n_{2\dd}}(f_i)_\ba \overline y_\ba=\sum_{\ba\in\N^n_{2{\bar s}}}(s_i)_\ba \overline \bx^\ba+\sum_{\ba\in\N^n_{2{\bar l}}}(l_i)_\ba \overline \bx^\ba=s_i(\overline\bx)+l_i(\overline\bx)=f_i(\overline\bx).
\end{align*}
Similarly, if we choose $(\p, \q) \in N_{k}$ such that $\p = \bze$ and  $q_j=1$ if $j = i,$ otherwise $p_j = 0,$ then, from \eqref{rel2_thm2}, we have
\begin{align*}
0\leq 1-\sum_{\ba\in\N^n_{2\dd}}(f_i)_\ba \overline y_\ba=1-f_i(\overline\bx).
\end{align*}
Since $i\in\{1,\ldots,m\}$ is arbitrary, $\overline \bx$ is a feasible solution of the problem \eqref{P}.
Hence we have
\begin{align*}
{\rm val}\eqref{P}\geq {\rm val}\eqref{LDkdr}=\sum_{\ba\in\N^n_{2\dd}}(f_0)_\ba \overline y_\ba=f_0(\overline \bx)\geq{\rm val}\eqref{P},
\end{align*}
where the first inequality follows from the proof of Theorem~\ref{thm1}.
Thus $\overline \bx=(\overline y_\ba)_{|\ba|=1}\in\R^n$ is an optimal solution to the problem \eqref{P}.
\end{proof}

\small

\end{document}